\DeclareMathOperator{\Bi}{\textup{B}}
\DeclareMathOperator{\End}{\textup{End}}
\DeclareMathOperator{\Hl}{\textup{H}}
\DeclareMathOperator{\Imm}{\textup{Im}}
\DeclareMathOperator{\et}{\textup{et}}
\newcommand*{\defeq}{\mathrel{\rlap{%
                     \raisebox{0.3ex}{$\m@th\cdot$}}%
                     \raisebox{-0.3ex}{$\m@th\cdot$}}%
                     =}
\theoremstyle{plain}
\newtheorem{thm}{Theorem}[section]
\newtheorem*{thm*}{Theorem}
\newtheorem{lem}[thm]{Lemma}
\newtheorem{cor}[thm]{Corollary}
\newtheorem{prop}[thm]{Proposition}
\newtheorem*{conj*}{Conjecture}
\newenvironment{thmcite}[1]
{\medskip \noindent {\bf Theorem (#1).}\em }{\rm \medskip}
\theoremstyle{definition}
\newtheorem{defn}[thm]{Definition} 
\newtheorem{ex}[thm]{Example}
\newtheorem{rmk}[thm]{Remark}
\numberwithin{thm}{section}
\newcounter{x}\setcounter{x}{1}
\newcommand{\red}{{\rm red}}
\newcommand{\perf}{{\rm perf}}
\newcommand{\Pic}{{\rm Pic}}
\newcommand{\Hom}{{\rm Hom}}
\newcommand{\Spec}{{\rm Spec \,}}
\newcommand{\Gal}{{\rm Gal}}
\newcommand{\PI}{{\rm PI}}
\newcommand{\sD}{{\mathcal D}}
\newcommand{\sN}{{\mathcal N}}
\newcommand{\sO}{{\mathcal O}}
\newcommand{\sP}{{\mathcal P}}
\newcommand{\sQ}{{\mathcal Q}}
\newcommand{\sU}{{\mathcal U}}
\newcommand{\sV}{{\mathcal V}}
\newcommand{\sX}{{\mathcal X}}
\newcommand{\sY}{{\mathcal Y}}
\newcommand{\A}{{\mathbb A}}
\newcommand{\C}{{\mathbb C}}
\newcommand{\E}{{\mathbb E}}
\newcommand{\F}{{\mathbb F}}
\newcommand{\N}{{\mathbb N}}
\newcommand{\Q}{{\mathbb Q}}
\newcommand{\Z}{{\mathbb Z}}
\newcommand{\NN}{\textup{N}}
\newcommand{\LL}{\textup{L}}
\newcommand{\Vect}{\text{\sf Vect}}
\newcommand{\Rep}{\text{\sf Rep}\,}
\newcommand{\id}{{\rm id\hspace{.1ex}}}
\renewcommand{\et}{\textup{\'et}}
\theoremstyle{plain}
\newtheorem{thmI}{Theorem}
\newtheorem{thmII}{Theorem}
\newtheorem{conjI}{Conjecture}
\newtheorem{conjII}{Conjecture}
\begin{document}

\title{The arithmetic local Nori fundamental group}

\author{Matthieu Romagny, Fabio Tonini, Lei Zhang }
\address{ Matthieu Romagny\\
    Institut de Recherche Math\'ematique de Rennes, Universit\'e Rennes~1, Campus de
Beaulieu, 35042 Rennes Cedex, France}
\email{matthieu.romagny@univ-rennes1.fr }

\address{ Fabio Tonini\\ Dipartimento di Matematica e
    Informatica `Ulisse Dini', Universit\'a degli Studi di
    Firenze, Viale Morgagni, 67/a, Florence 50134, italy.
   }
\email{fabio.tonini@unifi.it}
 \address{ Lei Zhang\\
     Department of Mathematics, The Chinese University of Hong
Kong, Shatin, New Territories, Hongkong}
\email{lzhang@math.cuhk.edu.hk}

\thanks{This work was supported by  the Research Grants Council
    (RGC) of the Hongkong SAR China (Project No. CUHK 14301019). The second author was supported by GNSAGA of INdAM. 
The first author was supported by
the Centre Henri Lebesgue, program ANR-11-LABX-0020-01 and would like to
thank the executive and administrative staff of IRMAR and of the Centre
Henri Lebesgue for creating an attractive mathematical environment.}

\date{\today}

\global\long\def\A{\mathbb{A}}

\global\long\def\Ab{(\textup{Ab})}

\global\long\def\C{\mathbb{C}}

\global\long\def\Cat{(\textup{cat})}

\global\long\def\Di#1{\textup{D}(#1)}

\global\long\def\E{\mathcal{E}}

\global\long\def\F{\mathbb{F}}

\global\long\def\GCov{G\textup{-Cov}}

\global\long\def\Gcat{(\textup{Galois cat})}

\global\long\def\Gfsets#1{#1\textup{-fsets}}

\global\long\def\Gm{\mathbb{G}_{m}}

\global\long\def\GrCov#1{\textup{D}(#1)\textup{-Cov}}

\global\long\def\Grp{(\textup{Grps})}

\global\long\def\Gsets#1{(#1\textup{-sets})}

\global\long\def\HCov{H\textup{-Cov}}

\global\long\def\MCov{\textup{D}(M)\textup{-Cov}}

\global\long\def\MHilb{M\textup{-Hilb}}

\global\long\def\N{\mathbb{N}}

\global\long\def\PGor{\textup{PGor}}

\global\long\def\PGrp{(\textup{Profinite Grp})}

\global\long\def\PP{\mathbb{P}}

\global\long\def\Pj{\mathbb{P}}

\global\long\def\Q{\mathbb{Q}}

\global\long\def\RCov#1{#1\textup{-Cov}}

\global\long\def\RR{\mathbb{R}}

\global\long\def\Sch{\textup{Sch}}

\global\long\def\WW{\textup{W}}

\global\long\def\Z{\mathbb{Z}}

\global\long\def\acts{\curvearrowright}

\global\long\def\alA{\mathscr{A}}

\global\long\def\alB{\mathscr{B}}

\global\long\def\arr{\longrightarrow}

\global\long\def\arrdi#1{\xlongrightarrow{#1}}

\global\long\def\catC{\mathscr{C}}

\global\long\def\catD{\mathscr{D}}

\global\long\def\catF{\mathscr{F}}

\global\long\def\catG{\mathscr{G}}

\global\long\def\comma{,\ }

\global\long\def\covU{\mathcal{U}}

\global\long\def\covV{\mathcal{V}}

\global\long\def\covW{\mathcal{W}}

\global\long\def\duale#1{{#1}^{\vee}}

\global\long\def\fasc#1{\widetilde{#1}}

\global\long\def\fsets{(\textup{f-sets})}

\global\long\def\iL{r\mathscr{L}}

\global\long\def\id{\textup{id}}

\global\long\def\la{\langle}

\global\long\def\odi#1{\mathcal{O}_{#1}}

\global\long\def\ra{\rangle}

\global\long\def\set{(\textup{Sets})}

\global\long\def\sets{(\textup{Sets})}

\global\long\def\shA{\mathcal{A}}

\global\long\def\shB{\mathcal{B}}

\global\long\def\shC{\mathcal{C}}

\global\long\def\shD{\mathcal{D}}

\global\long\def\shE{\mathcal{E}}

\global\long\def\shF{\mathcal{F}}

\global\long\def\shG{\mathcal{G}}

\global\long\def\shH{\mathcal{H}}

\global\long\def\shI{\mathcal{I}}

\global\long\def\shJ{\mathcal{J}}

\global\long\def\shK{\mathcal{K}}

\global\long\def\shL{\mathcal{L}}

\global\long\def\shM{\mathcal{M}}

\global\long\def\shN{\mathcal{N}}

\global\long\def\shO{\mathcal{O}}

\global\long\def\shP{\mathcal{P}}

\global\long\def\shQ{\mathcal{Q}}

\global\long\def\shR{\mathcal{R}}

\global\long\def\shS{\mathcal{S}}

\global\long\def\shT{\mathcal{T}}

\global\long\def\shU{\mathcal{U}}

\global\long\def\shV{\mathcal{V}}

\global\long\def\shW{\mathcal{W}}

\global\long\def\shX{\mathcal{X}}

\global\long\def\shY{\mathcal{Y}}

\global\long\def\shZ{\mathcal{Z}}

\global\long\def\st{\ | \ }

\global\long\def\stA{\mathcal{A}}

\global\long\def\stB{\mathcal{B}}

\global\long\def\stC{\mathcal{C}}

\global\long\def\stD{\mathcal{D}}

\global\long\def\stE{\mathcal{E}}

\global\long\def\stF{\mathcal{F}}

\global\long\def\stG{\mathcal{G}}

\global\long\def\stH{\mathcal{H}}

\global\long\def\stI{\mathcal{I}}

\global\long\def\stJ{\mathcal{J}}

\global\long\def\stK{\mathcal{K}}

\global\long\def\stL{\mathcal{L}}

\global\long\def\stM{\mathcal{M}}

\global\long\def\stN{\mathcal{N}}

\global\long\def\stO{\mathcal{O}}

\global\long\def\stP{\mathcal{P}}

\global\long\def\stQ{\mathcal{Q}}

\global\long\def\stR{\mathcal{R}}

\global\long\def\stS{\mathcal{S}}

\global\long\def\stT{\mathcal{T}}

\global\long\def\stU{\mathcal{U}}

\global\long\def\stV{\mathcal{V}}

\global\long\def\stW{\mathcal{W}}

\global\long\def\stX{\mathcal{X}}

\global\long\def\stY{\mathcal{Y}}

\global\long\def\stZ{\mathcal{Z}}

\global\long\def\then{\ \Longrightarrow\ }

\global\long\def\L{\textup{L}}

\global\long\def\l{\textup{l}}


\begin{abstract}
In this paper we introduce the local Nori fundamental group scheme
of a reduced scheme or algebraic stack over a perfect field $k$.
We give particular attention to the case of fields: to any field
extension $K/k$ we attach a pro-local group scheme over~$k$. We show
how this group has many analogies, but also some crucial differences,
with the absolute Galois group. We propose two conjectures,
analogous to the classical Neukirch-Uchida Theorem and Abhyankar
Conjecture, providing some evidence in their favor.
Finally we show that the local fundamental group of a normal variety
is a quotient of the local fundamental group of an open, of its generic
point (as it happens for the \'etale fundamental group) and even
of any smooth neighborhood.
\end{abstract}

\setcounter{section}{0}
\maketitle
\section*{Introduction}
Let $k$ be a perfect field of characteristic $p\ge 0$.
Given a $k$-variety $X$ with a rational
point $x\in X(k)$, Nori constructed a profinite group scheme
$\pi^\NN(X,x)$ satisfying the following universal property: there
exists a natural bijection
\[
\Hom_k(\pi^\NN(X,x),G)\arr
\{ \text{pointed } G\text{-torsors } (P,p)\to (X,x)\}
\]
for all (pro-)finite group schemes $G$ over $k$. This was later called
the {\em Nori fundamental group of $X$ at $x$}. Since every
finite group scheme is an extension of an \'etale group scheme by a
local one (recall that a group scheme is called {\em local}, or
{\em infinitesimal}, if it is finite and connected), it is also
interesting to focus on both subclasses. One finds that the
maximal pro-\'etale quotient $\pi^{\NN,\et}(X,x)$ and the maximal
pro-local quotient $\pi^\L(X,x)$ of Nori's fundamental group satisfy
the universal properties restricted to the subclasses.

In the same way as Grothendieck's \'etale fundamental group ``parametrizes''
Galois \'etale covers, Nori's fundamental group ``parametrizes'' pointed
torsors under finite group schemes. One important difference is the
necessity to label the torsors with a base point in Nori's version.
This has regrettable consequences: for instance $\pi^\NN(\Spec k,x)=1$
even if $k$ is not algebraically closed. One of the purposes of the
present paper is to remove the need for a rational point. For any reduced
scheme or algebraic stack $X$ over $k$ we will define the {\em local Nori
fundamental group scheme $\pi^\L(X/k)$}, a pro-local group scheme over
$k$ with bijections functorial in the (pro)-local group scheme $G$:
\[
\Hom_k(\pi^\L(X/k),G)\arr \{ G\text{-torsors } P \to X\}.
\]
This definition makes sense because the category of torsors under
{\em local} group schemes is equivalent to a set, that is it is a
groupoid with only identities as automorphisms, unlike what happens
for general finite group schemes.

The emancipation from the need of a rational point in the theory of Nori's
fundamental group was also addressed by Borne and Vistoli \cite{BV15}.
Their idea was to use finite gerbes instead of finite group schemes.
The so-called {\em Nori fundamental gerbe} thus obtained recovers the
Nori fundamental group as soon as one fixes a rational point.
In \cite[\S 7]{TZ19} the same theory was worked out for local gerbes,
obtaining the local Nori fundamental gerbe (see~\ref{local nori gerbe}).
It turns out that the gerbe ``is exactly'' the local Nori fundamental group
without the need of any point, because a local gerbe over a perfect field
is uniquely and therefore canonically neutral. This is actually how we
come up with the above definition (see~\ref{profinite gerbe}). Moreover,
as a consequence of \cite{TZ19} there is an explicit Tannakian description
of the representation category $\Rep\pi^\L(X/k)$ in terms of vector
bundles on $X$ (see~\ref{Tannakian interpretation of the local nori gerbe}).

\begin{center} $\therefore$ \end{center}

As an evidence that the local Nori fundamental group has valuable
arithmetic content, we shall see that it is an extremely interesting
object when $X$ is the spectrum of a field $K$. In this case, we use
the simplified notation
\[
\pi^\L(K/k)\defeq\pi^\L(\Spec K/k).
\]
Due to the parallel between the local Nori fundamental group of a field $K$
and its absolute Galois group, it is desirable to see to what extent
their behaviours resemble or differ. We approach the question from
three viewpoints: the Galois correspondence, the anabelian philosophy,
and the inverse Galois problem. In the rest of the introduction, we
present our findings: two main results and two conjectures with piece
of evidence supporting them. Our first main theorem is a part of the Galois
correspondence.

\begin{thmI} \label{The Main Theorem}
Let $k$ be a perfect field and $K/k$ a field extension.
Denote by $\PI(K)$ the totally ordered set of purely inseparable
extensions of $K$. The mapping
   \[
  \begin{tikzcd}[column sep=20,
    ,row sep = 0ex
    ,/tikz/column 1/.append style={anchor=base east}
    ,/tikz/column 2/.append style={anchor=base west}
    ]
\PI(K) \ar[r] & \{\textup{subgroups of }\pi^\LL(K/k)\} \\
E/K \ar[r,mapsto] & (\pi^\LL(E/k)\to \pi^\L(K/k)).
  \end{tikzcd}
  \]
is well-defined and an order-reversing embedding, that is
$\pi^\L(E_1/k)\subseteq \pi^\L(E_2/k) \subseteq \pi^\L(K/k)$
if and only if $K\subseteq E_2\subseteq E_1$
for all $E_1,E_2\in \PI(K)$.
\end{thmI}

It would be interesting to understand the relation between the
above embedding and previous classifications of purely
inseparable extensions, starting from the Jacobson
correspondence and its early developments (see \cite{Ja44},
\cite{Ge64}, \cite{GZ70}, \cite{Ch71}, \cite{He71}, \cite{Mo75})
until very recent generalizations (see \cite{Ba18} and \cite{BW20}).

We also show that the role played in Galois Theory by separably closed
fields is played in the theory of the local Nori fundamental group by
perfect fields, in the precise sense that~$K$ is perfect if and
only if $\pi^\LL(K/k)=1$ (see~\ref{perfect means group zero}).
However, exceeding enthusiasm for the desired analogy should not reign.
Indeed, we prove that the map in Theorem~\ref{The Main Theorem} is highly
non surjective unless $K$ is perfect. Moreover,
for a nontrivial finite purely inseparable extension $L/K$ the subgroup
$\pi^\LL(L/k)$ never has finite index in $\pi^\LL(K/k)$
(see~\ref{counterexamples}).

Nevertheless, we expect $\pi^\L(E/k)$ to carry a lot of information
on the field $E$ and in some cases to be able to recover $E$. We
propose the following anabelian-style conjecture.

\begin{conjI}\label{NU conjecture}
 Let $k$ be a perfect field and $K,E$ finitely generated field
extensions of $k$. Assume that $K$ and $E$ are not finite over $k$.
Then
 \[
 \pi^\L(K/k) \simeq \pi^\L(E/k) \then K\simeq E
 \]
(isomorphism of $k$-group schemes on the left, isomorphism
of $k$-extensions on the right).
\end{conjI}

This is the local analogue of (a part of) the Neukirch-Uchida Theorem:

\begin{thmcite}{\cite{Uch77}, \cite{Pop1}, \cite{Pop2}, \cite{Mo99}}
If $K,E$ are number fields, then
\[
\Gal(K)\simeq \Gal(E)\then K\simeq E.
\]
If $K,E$ are infinite and finitely generated fields over $\F_p$ then
\[
\Gal(K)\simeq \Gal(E) \then K^\perf\simeq E^\perf
\]
where $(-)^\perf$ denotes the perfect closure functor of fields.
\end{thmcite}

Theorem~\ref{The Main Theorem} aims to be a baby case of
Conjecture~\ref{NU conjecture}.
More generally we show that, in the hypothesis of
Conjecture~\ref{NU conjecture}, a $k$-map $K\to E$ induces an
isomorphism $\pi^\L(E/k) \to \pi^\L(K/k)$ if and only if it is an isomorphism
(see~\ref{isom fund group}). The map $\pi^\L(E/k)\to \pi^\L(K/k)$ is actually
a quotient for separable extensions $E/K$ (see~\ref{surjective fund group}).
The reason why we expect Conjecture~\ref{NU conjecture} to be true is
that the explicit description of $\Rep \pi^\L(K/k)$ seems to tell a lot
about the arithmetic of~$K$. For instance, we construct isomorphisms
of groups:
\[
\Hom(\pi^\L(K/k),\Gm)\simeq (K^\perf)^*/K^*
\quad\text{and}\quad \Hom(\pi^\L(K/k),\mathbb{G}_a)\simeq K^\perf/K.
\]
Multiplicative and additive structures on $K^\perf$ are determined
by $\pi^\L(K/k)$, yet we still see no way to relate them for the
moment. The above
groups can be described in terms of one-dimensional representations
of $\pi^\L(K/k)$ and extensions of the trivial representation
respectively. We therefore expect that crucial information is contained
in higher dimensional representations.

Finally, in the spirit of the inverse Galois problem, we consider the
fundamental example given by the field of rational functions in one variable,
and we state the following conjecture.

\begin{conjII}\label{A conjecture}
 Let $k$ be a perfect field. Then any local group scheme $G$ over $k$ is a quotient of $\pi^\L(k(t)/k)$, that is there exists a $G$-torsor $P\to \Spec k(t)$ which is not induced by a torsor under a strict subgroup of $G$.
\end{conjII}

This is analogous to the ''generic Abhyankar conjecture''
for the absolute Galois group:

\begin{thmcite}{\cite{Har94}}
 Let $k$ be an algebraically closed field. Then any finite group~$G$
is a quotient of $\Gal(k(t))$, that is there exists a Galois extension
of $k(t)$ with group~$G$.
\end{thmcite}

We call Conjecture \ref{A conjecture} the local Abhyankar conjecture.
A very similar, stronger conjecture was stated in
\cite[Question 1.1]{ot18} and proved in several cases in the same paper
and in \cite{ot19}, \cite{OTZ20}. The setting is slightly different:
for an affine $k$-curve $U$, Otabe \cite[Question 1.1]{ot18} predicts
which local group schemes over $k$ appear as a quotient of $\pi^\L(U/k)$,
just like the general Abhyankar conjecture predicts which finite groups
occur as a quotient of the \'etale fundamental group $\pi_1(U,u)$.
Our contribution here is in proving that the relation between the global
and generic local Abhyankar conjectures is similar to that in the
non-local case. Namely, the generic local fundamental group surjects
onto the global one. We prove more generally the following result:

\begin{thmII}\label{main theorem II}
 Let $\stX$ be a normal, quasi-separated and irreducible algebraic stack over $k$ and
$\stV\to \stX$ a map from a reduced algebraic stack. If $\stV\to \stX$ has non empty reduced geometric generic fiber then the map
 \[
 \pi^\L(\stV/k)\to \pi^\L(\stX/k)
 \]
 is surjective. This is the case, for instance, if $\stV\to \stX$ is either
\begin{enumerate}
\item[{\rm (1)}] flat, geometrically reduced and has an open image
(e.g.  an open embedding);
\item[{\rm (2)}] it exhibits $\stV$ as a generic point of a smooth atlas of $\sX$.
 \end{enumerate}
\end{thmII}

In particular, all groups considered in \cite{ot18} and \cite{OTZ20} satisfy Conjecture \ref{A conjecture}.
Finally we show that Conjecture \ref{A conjecture} implies the same result for many other fields. Indeed we show that $\pi^\L(k((t))/k)\to \pi^\L(k(t)/k)$ is surjective (see \ref{series surjective}) and that, if $K$ is any finitely generated field extension of $k$ of positive transcendence degree, there exists an indeterminate $t\in K$ such that also $\pi^\L(K/k)\to \pi^\L(k(t)/k)$ is surjective (see \ref{one variable is enough}).

The paper is divided as follows. In Section~\ref{local Nori over a field}
we define the local Nori fundamental group for a field using Tannaka
Theory, and we prove that Theorem~\ref{The Main Theorem} holds for this
group scheme. In Section~\ref{local Nori in general} we introduce the
general local Nori fundamental group via the local Nori fundamental
gerbe, and we prove the connection with torsors via the universal
property as stated in the beginning in the introduction. Finally in
Section~\ref{section:generic surjectivity} we discuss surjectivity results about
the local fundamental group, and we prove Theorem~\ref{main theorem II}.

\bigskip

\noindent {\bf Acknowledgement.}
We would like to thank  D. Tossici, S. Otabe and A. Vistoli for helpful conversations and suggestions received.

\section{The local Nori fundamental group of a field}
\label{local Nori over a field}

Let $k$ be a perfect field of positive characteristic $p$.
Given a field $K$ we will denote by $K^\perf$ its perfect closure
and by $\Vect(K)$ the category of finite dimensional $K$-vector
spaces. The aim of this section is to give a direct definition of
the local fundamental group scheme of a field extension $K/k$ that
avoids torsors. Then we prove that with this definition, the conclusion
of Theorem~\ref{The Main Theorem} holds. The equivalence between the
two definitions, namely the verification of the universal property,
is proved later (see \ref{prop: universal property local fund group}).

\begin{defn}\label{Local Tanna}
Let $K/k$ be a field extension. We define $\sD_{\infty}(K/k)$
as the category whose objects are triples $(V,W,\psi)$, where
$V\in\Vect(K)$, $W\in\Vect(k)$ and
\[\psi:K^{\perf}\otimes_KV\to K^{\perf}\otimes_kW\]
is a $K^\perf$-linear isomorphism. An arrow $(V,W,\psi)\to (V',W',\psi')$
in $\sD_{\infty}(K/k)$ is a pair $(a,b)$ composed of a $K$-linear
map $a\colon V\to V'$ and a $k$-linear map $b\colon W\to W'$ which
are compatible with $\psi$ and $\psi'$.
\end{defn}

The category $\shD_\infty(K/k)$ with its natural tensor product and $k$-linear structure is a neutral $k$-Tannakian category with the forgetful functor $\shD_\infty(K/k)\arr \Vect(k)$ as the fiber functor. We define the \emph{local Nori fundamental group} $\pi^\LL(K/k)$ of $K/k$ as the Tannakian group scheme associated with $\shD_\infty(K/k)$.

\begin{lem}\label{injectivity}
If $L/K$ is a purely inseparable extension of fields over $k$ then the induced map of group schemes $\pi^\LL(L/k)\to\pi^\LL(K/k)$ is a closed embedding.
\end{lem}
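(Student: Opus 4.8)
The plan is to recast the statement in Tannakian terms and then verify the standard criterion for a homomorphism of affine group schemes to be a closed immersion. By Tannakian duality the morphism $\pi^\LL(L/k)\arr\pi^\LL(K/k)$ is the one induced by the base-change tensor functor
\[
F\colon \sD_{\infty}(K/k)\arr\sD_{\infty}(L/k),\qquad (V,W,\psi)\longmapsto (L\otimes_K V,\,W,\,\psi_L),
\]
where $\psi_L$ is obtained from $\psi$ by extending scalars along $K^\perf\arr L^\perf$. First I would record that $F$ is a well-defined exact faithful $k$-linear tensor functor compatible with the two forgetful fibre functors (both send an object to its $W$-component), so that it indeed induces the stated map; exactness comes from flatness of $L/K$ and faithfulness from faithful flatness. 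Then, by the Deligne--Milne dictionary between tensor functors and homomorphisms of Tannakian group schemes, it suffices to show that every object of $\sD_{\infty}(L/k)$ is isomorphic to a subquotient of some $F(X)$ with $X\in\sD_{\infty}(K/k)$, this being exactly the criterion characterising closed immersions.

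The decisive input is that $L^\perf=K^\perf$, which holds because $L/K$ is purely inseparable: every element of $L$ has a $p$-power in $K$, so $K\subseteq L\subseteq K^\perf$ and hence $K^\perf\subseteq L^\perf\subseteq K^\perf$. Under this identification the extension of scalars $K^\perf\arr L^\perf$ is the identity, so $\psi_L$ is literally $\psi$ after the canonical identification $L^\perf\otimes_L(L\otimes_K V)=K^\perf\otimes_K V$. This is precisely what allows the $\psi$-datum of an object over $L$ to be reused verbatim over $K$.

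I would then prove the stronger fact that $F$ is essentially surjective, which trivially yields the subquotient criterion. Given $(V',W',\psi')\in\sD_{\infty}(L/k)$, put $n=\dim_L V'=\dim_k W'$, take $W=W'$ and $V=K^{n}$, and choose any $L$-linear isomorphism $a\colon L\otimes_K V=L^{n}\arr V'$. Transporting $\psi'$ through $1\otimes a$ and through $L^\perf=K^\perf$ produces a $K^\perf$-linear isomorphism $\psi:=\psi'\circ(1\otimes a)\colon K^\perf\otimes_K V\arr K^\perf\otimes_k W'$, i.e. a legitimate object $(V,W',\psi)$ of $\sD_{\infty}(K/k)$, and by construction $(a,\id)$ is an isomorphism $F(V,W',\psi)\arr(V',W',\psi')$. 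Hence the criterion is met and $\pi^\LL(L/k)\arr\pi^\LL(K/k)$ is a closed immersion.

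The main thing to get right is the Tannakian criterion itself together with the careful handling of the three base-change identifications; the purely inseparable hypothesis enters the argument only through $L^\perf=K^\perf$, after which the construction is pure bookkeeping. As a consistency check, and to confirm that the embedding is genuinely proper rather than an isomorphism, I would note that $F$ is \emph{not} full: on rank-one objects it realises the natural surjection $(K^\perf)^*/K^*\surj (K^\perf)^*/L^*$ on isomorphism classes, which is exactly the expected map on character groups (using $\Hom_k(\pi^\LL(K/k),\Gm)\simeq (K^\perf)^*/K^*$) and has nontrivial kernel $L^*/K^*$ whenever $L\neq K$.
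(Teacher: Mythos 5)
Your proposal is correct and takes essentially the same route as the paper: both invoke the Deligne--Milne criterion \cite[Prop.\ 2.21 (b)]{DM82} and verify it by proving the pullback functor $\sD_\infty(K/k)\arr\sD_\infty(L/k)$ is essentially surjective, with the purely inseparable hypothesis entering only through the identification $L^\perf=K^\perf$ that lets the isomorphism datum $\psi'$ be reused over $K$. The only cosmetic difference is that you trivialize just the $V$-component (keeping $W'$) where the paper trivializes both sides to $(L^{\oplus m},k^{\oplus m},\varphi)$, and your closing non-fullness remark, while an inessential aside, is consistent with the paper's character computation in Proposition \ref{picard of local gerbe}.
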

\begin{proof} By \cite[Prop. 2.21 (b)]{DM82} it is enough to show that the
    pullback functor 
    $$\sD_{\infty}(K/k)\to \sD_{\infty}(L/k)$$ is essentially surjective. 
    Let $(V,W,\phi)\in\shD_\infty(L/k)$. Clearly there is an isomorphism
$(V,W,\phi)\cong(L^{\oplus m},k^{\oplus m},\varphi)\in\shD_\infty(L/k)$.
    Now consider the isomorphism
\[\varphi\colon (L^\perf)^{\oplus m}\arr(L^\perf)^{\oplus m}.\]
    Since $L/K$ is purely inseparable, we can identify $K^\perf$ with $L^\perf$. In this case it is easy to see that 
    $(K^{\oplus m},k^{\oplus m},\varphi)\in\shD_\infty(K/k)$ is sent to 
    $(L^{\oplus m},k^{\oplus m},\varphi)$.
\end{proof}

\begin{prop}\label{picard of local gerbe}
Let $K/k$ be a field extension. Then there is a canonical isomorphism
\[
\Pic(\shD_\infty(K/k))\simeq \Hom_k(\pi^\LL(K/k),\Gm)\simeq (K^\perf)^*/K^*.
\]
\end{prop}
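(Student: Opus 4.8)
The plan is to treat the two isomorphisms separately. The first isomorphism $\Pic(\shD_\infty(K/k))\simeq\Hom_k(\pi^\LL(K/k),\Gm)$ is a general fact about neutral Tannakian categories and needs no special input about $\shD_\infty$: under the equivalence $\shD_\infty(K/k)\simeq\Rep_k(\pi^\LL(K/k))$ furnished by the fiber functor, an object is invertible for $\otimes$ precisely when it has rank $1$, i.e.\ when it is a one-dimensional representation, and such representations are exactly the characters $\pi^\LL(K/k)\arr\Gm$; the group law on $\Pic$ induced by $\otimes$ matches composition in $\Hom_k(-,\Gm)$. So the heart of the matter is to produce a natural isomorphism $\Pic(\shD_\infty(K/k))\simeq(K^\perf)^*/K^*$ by analysing invertible objects by hand.

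First I would pin down the invertible objects. For $(V,W,\psi)$ the fiber functor returns $W$, so rank $1$ forces $\dim_k W=1$; since $\psi$ is a $K^\perf$-linear isomorphism $K^\perf\otimes_K V\arr K^\perf\otimes_k W$ we also get $\dim_K V=\dim_k W=1$. Choosing a $K$-basis of $V$ and a $k$-basis of $W$ identifies $(V,W,\psi)$ with a normal form $(K,k,\lambda)$, where under $K^\perf\otimes_K K=K^\perf=K^\perf\otimes_k k$ the map $\psi$ becomes multiplication by a scalar $\lambda\in(K^\perf)^*$, nonzero precisely because $\psi$ is an isomorphism. Since $(K,k,\lambda)\otimes(K,k,\mu)=(K,k,\lambda\mu)$, this gives a surjective homomorphism $(K^\perf)^*\surj\Pic(\shD_\infty(K/k))$, $\lambda\mapsto[(K,k,\lambda)]$.

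Next I would compute its kernel, i.e.\ decide when $(K,k,\lambda)\cong(K,k,\mu)$. A morphism between them is a pair $(a,b)$ with $a\in\End_K(K)$ and $b\in\End_k(k)$, so $a$ is multiplication by some $\alpha\in K$ and $b$ by some $\beta\in k$, and it is an isomorphism exactly when $\alpha\in K^*$, $\beta\in k^*$. The compatibility of $(a,b)$ with $\psi$ reads $\mu\alpha=\beta\lambda$ in $K^\perf$, whence $\mu=(\beta/\alpha)\lambda$. As $\alpha$ ranges over $K^*$ and $\beta$ over $k^*\subseteq K^*$, the ratio $\beta/\alpha$ ranges over all of $K^*$, so $(K,k,\lambda)\cong(K,k,\mu)$ if and only if $\mu/\lambda\in K^*$. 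This identifies the kernel with $K^*$ and yields $\Pic(\shD_\infty(K/k))\simeq(K^\perf)^*/K^*$, manifestly compatibly with the first isomorphism.

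The argument is essentially bookkeeping in one-dimensional linear algebra; the only points demanding care are the clean identifications $K^\perf\otimes_K K\cong K^\perf\cong K^\perf\otimes_k k$ that collapse $\psi$ to a single scalar, and checking that the resulting bijection is a group isomorphism (and natural, so that all three terms in the statement agree). I expect the sole mild obstacle to be confirming that every rank-one object is isomorphic to some normal form $(K,k,\lambda)$ and that the only identifications arise from rescaling the two chosen bases; once that is in place, the computation that the ambiguity group $\{\beta/\alpha\}$ equals $K^*$ is immediate.
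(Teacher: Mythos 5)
Your proposal is correct and takes essentially the same route as the paper: the paper also identifies $\Pic(\shD_\infty(K/k))$ with $\Hom_k(\pi^\LL(K/k),\Gm)$ via one-dimensional representations, and then exhibits the homomorphism $(K^\perf)^*\arr\Pic(\shD_\infty(K/k))$, $\lambda\longmapsto (K,k,\lambda)$, asserting surjectivity and kernel $K^*$. Your normal-form reduction and the computation $\mu=(\beta/\alpha)\lambda$ simply supply the details the paper dismisses as ``easy to see.''
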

\begin{proof}
The first isomorphism exists since both sides are the group of isomorphism classes of $1$-dimensional representations of $\pi^\LL(K/k)$. There is a group homomorphism
   \[
  \begin{tikzcd}[column sep=20,
    ,row sep = 0ex
    ,/tikz/column 1/.append style={anchor=base east}
    ,/tikz/column 2/.append style={anchor=base west}
    ]
(K^\perf)^* \ar[r] & \Pic(\shD_\infty(K/k)) \\
\phi \ar[r,mapsto] & (K,k,\phi).
  \end{tikzcd}
  \]
It is easy to see that it is surjective and that its kernel is $K^*$.
\end{proof}

\begin{proof}[Proof of Theorem \ref{The Main Theorem}]
By \ref{injectivity} we see that the map in the statement is well defined. All elements of $\PI(K)$ has a unique embedding in $K^\perf$. Thus we have to prove that, if $K\subseteq L_1,L_2\subseteq K^\perf$ and $\pi^\LL(L_1/k)\subseteq \pi^\LL(L_2/k)\subseteq \pi^\LL(K/k)$, then $L_2\subseteq L_1$.

By \ref{picard of local gerbe} and the commutative diagram of group schemes
\[\begin{tikzpicture}[xscale=3.0,yscale=-1.4]
    \node (A0_0) at (0, 0) {$\pi^\LL(L_1/k)$};
    \node (A0_2) at (2, 0) {$\pi^\LL(L_2/k)$};
    \node (A1_1) at (1, 1) {$\pi^\LL(K/k)$};
    \path (A0_0) edge [->]node [auto] {$\scriptstyle{\phi}$} (A0_2);
    \path (A0_0) edge [->]node [below] {$\scriptstyle{s_{L_1}}$} (A1_1);
    \path (A0_2) edge [->]node [auto] {$\scriptstyle{s_{L_2}}$} (A1_1);
  \end{tikzpicture}
\]
we get a commutative diagram of abelian groups
\[\begin{tikzpicture}[xscale=3.0,yscale=-1.4]
    \node (A0_0) at (0, 0) {$(K^{\perf})^*/L_2^*$};
    \node (A0_2) at (2, 0) {$(K^{\perf})^*/L_1^*$};
    \node (A1_1) at (1, 1) {$(K^{\perf})^*/K^*$};
    \path  (A0_0) edge [->]node [above] {$\scriptstyle{\varphi}$} (A0_2);
    \path  (A1_1)edge [->]node [auto] {$\scriptstyle{a}$} (A0_0);
    \path (A1_1) edge [->]node [below] {$\scriptstyle{b}$} (A0_2);
  \end{tikzpicture}
\]
where $s_{L_i}$ is the inclusion $\pi^\LL(L_i/k)\subseteq \pi^\LL(K/k)$. In particular we conclude
that $a$ and $b$ are induced on the perfect closure by $K\subseteq L_2$ and 
$K\subseteq L_1$ respectively, that is $a$ and $b$ are induced by the identity map. 
Then it is clear that $b(L_2^*/K^*)=1$, as  $a(L_2^*/K^*)=1$. Thus we have $L_2^*/K^*\subseteq L_1^*/K^*$. This shows the inclusion $L_2\subseteq L_1$.
\end{proof}

\begin{cor}\label{perfect means group zero}
 Let $K/k$ be a field extension. Then $K$ is perfect if and only if $\pi^\LL(K/k)=1$.
\end{cor}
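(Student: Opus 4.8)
The plan is to treat the two implications separately, using Proposition~\ref{picard of local gerbe} for one direction and a direct inspection of the Tannakian category $\shD_\infty(K/k)$ for the other. The asymmetry between the two is worth anticipating from the start.

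For the implication $\pi^\LL(K/k)=0 \Rightarrow K$ perfect, I would feed the trivial group scheme into the character computation of Proposition~\ref{picard of local gerbe}. Since $\Hom_k(0,\Gm)$ is the trivial group, the isomorphism $\Hom_k(\pi^\LL(K/k),\Gm)\simeq (K^\perf)^*/K^*$ forces $(K^\perf)^*=K^*$. As a field is the union of its nonzero elements with $0$, and $0\in K\subseteq K^\perf$, this yields $K^\perf=K$, i.e.\ $K$ is perfect. This direction is immediate.

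For the converse $K$ perfect $\Rightarrow \pi^\LL(K/k)=0$, I would argue directly that the fiber functor $\omega\colon\shD_\infty(K/k)\to\Vect(k)$, $(V,W,\psi)\mapsto W$, is an equivalence of $k$-Tannakian categories when $K=K^\perf$. Indeed, in that case $K^\perf\otimes_K V$ is just $V$, so the structural datum reads $\psi\colon V\xrightarrow{\sim} K\otimes_k W$, and via $\psi$ every object becomes isomorphic to one of the form $(K\otimes_k W,W,\id)$. This exhibits $W\mapsto(K\otimes_k W,W,\id)$ as a tensor quasi-inverse to $\omega$. An equivalence of neutral Tannakian categories respecting the fiber functors induces an isomorphism of the associated group schemes, and the Tannakian group of $(\Vect(k),\id)$ is trivial, whence $\pi^\LL(K/k)=0$.

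The one point that deserves care — and the reason the two directions are not symmetric — is that triviality of the character group does not by itself force triviality of the group scheme: a nontrivial local group such as $\alpha_p$ admits no nontrivial characters. Hence Proposition~\ref{picard of local gerbe} alone cannot settle the perfect $\Rightarrow$ zero direction, and the genuine content there is the categorical equivalence above rather than a character count. Checking the compatibility of $\psi$ with the pairs $(a,b)$ in $\shD_\infty(K/k)$ when assembling the quasi-inverse is the only routine verification, and I expect no real obstacle there beyond bookkeeping.
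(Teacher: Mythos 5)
Your proposal is correct, and its second half coincides with the paper's: for $K$ perfect the authors likewise observe that $(V,W,\phi)\xrightarrow{(\phi,\id)}(W\otimes_k K,W,\id)$ is an isomorphism in $\shD_\infty(K/k)$, so that the category collapses to $\Vect(k)$ and the Tannakian group is trivial. Where you genuinely diverge is the forward direction. The paper deduces it from Theorem \ref{The Main Theorem}: if $\pi^\LL(K/k)=0$ its only subgroup is trivial, so the order-reversing embedding forces $\PI(K)$ to be a singleton, which (since any $\lambda\in K\setminus K^p$ would produce the nontrivial element $K(\lambda^{1/p})$ of $\PI(K)$) means $K$ is perfect. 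You instead go straight to Proposition \ref{picard of local gerbe}: triviality of the character group gives $(K^\perf)^*=K^*$, hence $K^\perf=K^*\cup\{0\}=K$. Both arguments are sound, and they are close relatives, since the paper's proof of Theorem \ref{The Main Theorem} itself rests on the same character computation; your version is the more economical one, bypassing the main theorem entirely, while the paper's is a one-line corollary once Theorem \ref{The Main Theorem} is in hand. Your closing remark is also well taken and correctly identifies the asymmetry: since unipotent local groups such as $\alpha_p$ have no nontrivial characters, Proposition \ref{picard of local gerbe} can never prove the direction ``$K$ perfect $\Rightarrow\pi^\LL(K/k)=0$'', which is exactly why both you and the paper must argue categorically there.
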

\begin{proof}
 If $\pi^\LL(K/k)=0$, then from Theorem \ref{The Main Theorem} we see
that $\PI(K)$ has just one element, that is $K$ is perfect. Now assume
that $K$ is perfect, that is $K^\perf=K$. If $(V,W,\phi)\in\shD_\infty(K/k)$,
then $\phi\colon V\arr W\otimes_k K$ is an isomorphism. It is easy to
see that 
 \[(V,W,\phi)\arrdi{(\phi,\id)}(W\otimes_k K,W,\id)\]
 is an isomorphism. This means that $\Vect(k)=\shD_\infty(K/k)$,
that is $\pi^\LL(K/k)=1$.
\end{proof}

\begin{lem}\label{maximal diag quotient}
 If $G$ is an affine group scheme over $k$ then there exists a canonical surjective map 
 $$
 G\arr \Di{\Hom(G,\Gm)}
 $$
 where $\Di -$ is the diagonalizable group over $k$ associated with an abelian group, 
 which is universal among all maps to diagonalizable group schemes. 
\end{lem}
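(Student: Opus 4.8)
The plan is to identify the required map with the unit of an adjunction between affine group schemes and abelian groups, and then to verify surjectivity by a direct Hopf-algebra computation.

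Set $M=\Hom(G,\Gm)$ and recall that $\Di{M}=\Spec k[M]$ represents the functor $R\mapsto\Hom(M,R^{*})$ on $k$-algebras. The natural candidate for the canonical map is the evaluation pairing: on $R$-points it sends $g\in G(R)$ to the character $\chi\mapsto\chi(g)$ of $M$ valued in $R^{*}$. Dually, this is the Hopf-algebra homomorphism $k[M]\arr\odi{G}$ carrying each basis element $[\chi]$ to the group-like element $\chi\in\odi{G}$ that it names; I would check (routinely, using that each $\chi$ is group-like) that this is indeed a homomorphism of affine group schemes.

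Next I would establish the universal property. For every abelian group $N$ there is a natural bijection
$$\Hom(G,\Di{N})\simeq\Hom(N,\Hom(G,\Gm)),$$
sending $f\colon G\arr\Di{N}$ to the map taking $n\in N$ (a character of $\Di{N}$) to the character $n\circ f$ of $G$. This exhibits $\Hom(-,\Gm)$ as left adjoint to $\Di{-}$ (regarded on the opposite category of abelian groups), and the map constructed above is precisely the unit $G\arr\Di{\Hom(G,\Gm)}$. The universal property of a unit then yields, for any diagonalizable group $H\simeq\Di{N}$, a unique factorization through the canonical map of every homomorphism $G\arr H$; since every diagonalizable group is of this form, this is exactly the asserted universality.

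Finally, for surjectivity I would invoke the standard criterion that a homomorphism of affine group schemes over a field is faithfully flat (hence a quotient map) if and only if the associated map of Hopf algebras is injective. It therefore suffices to see that $k[M]\arr\odi{G}$ is injective. Its effect on the canonical $k$-basis $\{[\chi]\}_{\chi\in M}$ of the group algebra is the inclusion of the set of group-like elements $\{\chi\}_{\chi\in M}\subseteq\odi{G}$, and distinct group-like elements of a coalgebra are linearly independent over $k$; hence the map is injective and $G\arr\Di{M}$ is surjective. I expect this last step to be the only real obstacle, namely correctly stating the faithful-flatness criterion together with the linear independence of the characters, whereas the construction of the map and the universal property are formal consequences of the adjunction.
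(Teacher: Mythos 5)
Your proof is correct, but it takes a genuinely different route from the paper's. The paper never writes the map down explicitly: it forms the cofiltered category $\shR$ of surjections $G\twoheadrightarrow H$ with $H$ diagonalizable, sets $G'=\varprojlim_{(G\arr H)\in\shR}H$, argues that $G'$ is diagonalizable (being a projective limit of diagonalizable groups), that $G\arr G'$ is surjective because $k[G']$ is a union of sub-Hopf-algebras of $k[G]$, and that universality holds because subgroups and products of diagonalizable groups are diagonalizable; only at the very end does it identify $G'$ with $\Di{\Hom(G,\Gm)}$, via $\Hom(G,\Gm)=\Hom(G',\Gm)$. You instead construct the map concretely as the Hopf-algebra homomorphism $k[M]\arr\odi{G}$, $[\chi]\longmapsto\chi$, where $M=\Hom(G,\Gm)$ is identified with the group of group-like elements of $\odi{G}$; you obtain universality from the adjunction $\Hom(G,\Di{N})\simeq\Hom(N,\Hom(G,\Gm))$, and surjectivity from the linear independence of distinct group-like elements together with the criterion that a homomorphism of affine group schemes over a field is faithfully flat if and only if the dual Hopf-algebra map is injective (Deligne--Milne, Prop.\ 2.21(a)). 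Both arguments ultimately rest on the same two standard facts --- independence of group-likes (which also underlies your identification $\Hom(\Di{N},\Gm)\simeq N$ needed for the adjunction, and the paper's claim that a limit of diagonalizable groups is diagonalizable) and the injective-Hopf-map criterion for faithful flatness --- but they package them differently: your route buys an explicit description of the canonical map and makes the appearance of the character group transparent from the start, while the paper's limit construction buys brevity and avoids Hopf-algebra bookkeeping, at the cost of deferring the identification of the limit with $\Di{\Hom(G,\Gm)}$ to a formal final step.
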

\begin{proof}
There is a natural isomorphism
\[
\Hom(G,\Di H) \simeq \Hom(H,\Hom(G,\Gm))
\]
which implies the existence of a map
\[
\delta\colon G\to \Di{\Hom(G,\Gm)}
\]
universal for maps to diagonalizable group schemes. Since $\Imm(\delta)$ is diagonalizable and a closed subgroup of $\Di{\Hom(G,\Gm)}$, by the universal property it follows that $\delta$ is surjective.
\end{proof}

\begin{ex}\label{counterexamples}
Let $K$ be a field extension of $k$ which is not perfect. We claim that:
\begin{enumerate}
 \item there are subgroups $H$ of $\pi^\LL(K/k)$ which are not of the
form $\pi^\LL(L/k)$ for some purely inseparable extension $L/K$;
 \item if $L/K$ is a nontrivial purely inseparable extension, then the
quotient fpqc space $\pi^\LL(K/k)/\pi^\LL(L/k)$ is not finite over $k$.
\end{enumerate}
From \ref{picard of local gerbe} and \ref{maximal diag quotient} we obtain a canonical surjective map $\pi^\LL(K/k)\arr \Di{(K^\perf)^*/K^*}$ which is universal among maps to a diagonalizable group scheme. Given a purely inseparable extension $L/K$ we obtain a commutative diagram
  \[
  \begin{tikzpicture}[xscale=3.3,yscale=-1.2]
    \node (A0_0) at (0, 0) {$\pi^\LL(L/k)$};
    \node (A0_1) at (1, 0) {$\pi^\LL(K/k)$};
    \node (A0_2) at (2, 0) {$\pi^\LL(K/k)/\pi^\LL(L/k)$};
    \node (A1_0) at (0, 1) {$\Di{(L^\perf)^*/L^*}$};
    \node (A1_1) at (1, 1) {$\Di{(K^\perf)^*/K^*}$};
    \node (A1_2) at (2, 1) {$\Di{L^*/K^*}$};
    \path (A0_0) edge [right hook->]node [auto] {$\scriptstyle{}$} (A0_1);
    \path (A0_1) edge [->>]node [auto] {$\scriptstyle{}$} (A1_1);
    \path (A1_0) edge [right hook->]node [auto] {$\scriptstyle{}$} (A1_1);
    \path (A1_1) edge [->>]node [auto] {$\scriptstyle{}$} (A1_2);
    \path (A0_2) edge [->>]node [auto] {$\scriptstyle{}$} (A1_2);
    \path (A0_0) edge [->>]node [auto] {$\scriptstyle{}$} (A1_0);
    \path (A0_1) edge [->>]node [auto] {$\scriptstyle{}$} (A0_2);
  \end{tikzpicture}
  \]
In particular if $K^*\subseteq Q \subseteq (K^\perf)^*$ is a subgroup not
of the form $L^*$ for some purely inseparable extension $L$ of $K$ then 
the inverse image of $\Di{(K^\perf)^*/Q}\subseteq \Di{(K^\perf)^*/K^*}$
along $\pi^\LL(K/k)\arr \Di{(K^\perf)^*/K^*}$ cannot be a local fundamental 
group. For $(2)$ instead one just has to show that $L^*/K^*$ is not finitely 
generated. Indeed one observes that the vertical map on the right is 
faithfully flat because all other surjective maps are affine and faithfully 
flat (see \cite[\href{http://stacks.math.columbia.edu/tag/036J}{036J}]{SP21}). 

In order to have a concrete example for (1) and also show $(2)$ it is
enough to prove that, if $L=K[X]/(X^p-\lambda)$ with $\lambda\in K-K^p$,
then $L^*/K^*$ is not finitely generated. Since $L^*/K^*$ is an
$\F_p$-vector space it is enough to show that $L^*/K^*$ is infinite.
Set $v_n=1+\lambda^nX$ for $n\in \N$. We claim that all these elements
are different in $L^*/K^*$. If $v_m=v_n$ for some $m\neq n\in \N$
then a direct computation shows that $\lambda$ is a root of unity.
In particular it is algebraic and thus separable over $k$. In
conclusion $X$ is purely inseparable over the perfect field
$k(\lambda)$, from which we find the contradiction $X\in k(\lambda)\subseteq K$.  
\end{ex}

\section{The local Nori fundamental group in general}
\label{local Nori in general}

In this section we fix a base field $k$ of positive characteristic $p$.

\begin{defn}\label{local stuff}
  A group scheme $G$ over  $k$ is called \emph{local} if it is finite and connected.
  
  An affine gerbe $\Gamma$ over $k$ is called \emph{finite} (resp. finite and \emph{local}) if $\Gamma\times_k \overline k \simeq \Bi_{\overline k}G$, where $G$ is a finite (resp. finite and local) group scheme over $\overline k$.
  
  By a \textit{pro-local gerbe} (resp. \textit{pro-local group scheme}) over $k$ we mean a small cofiltered limit of finite and local gerbes (resp. group schemes) over $k$.
\end{defn}

\begin{defn}\label{local nori gerbe}
Let $\stX$ be an algebraic stack over $k$.
A \emph{local Nori fundamental gerbe} of $\stX/k$ is a pro-local
gerbe $\Pi$ over $k$ together with a morphism $\stX\arr \Pi$ such
that for all finite and local gerbes $\Gamma$ over $k$ the pullback functor
 \[
 \Hom_k(\Pi,\Gamma)\arr \Hom_k(\stX,\Gamma)
 \]
is an equivalence. If such a gerbe exists, it is unique and will
be denoted by $\Pi^\LL_{\stX/k}$.
\end{defn}

\begin{rmk}\label{existence of local gerbe}
Notice that a gerbe $\Gamma$ is finite (resp. finite and local) 
if and only it is finite (resp. finite and local) in the sense of 
\cite[Definition 3.1, p.~10]{TZ19} (resp. \cite[Definition 3.9, p.~12]{TZ19}). 
(See \cite[Proposition B.6, p.~41]{TZ19})

 By \cite[Theorem 7.1]{TZ19}, if $\stX$ is non-empty, reduced and
$\Hl^0(\odi\stX)$ does not contain nontrivial purely inseparable
field extensions of $k$, then the local Nori fundamental gerbe
$\Pi^\LL_{\stX/k}$ exists and coincides with the local Nori
fundamental gerbe considered
 in \cite[Definition 4.1, p.~12]{TZ19}. Moreover, the local Nori fundamental 
 gerbe is unique up to a unique isomorphism because pro-local gerbes are
 projective limits of finite and local gerbes. 
\end{rmk}

\begin{thm}{ \cite[Theorem 7.1]{TZ19} }
    \label{Tannakian interpretation of the local nori gerbe}
    Assume that $\stX$ is reduced and $\Hl^0(\odi\stX)$ does not
contain nontrivial purely inseparable field extensions of $k$.
Denote by $F\colon\stX\to\stX$ and $F_k:\Spec k\to\Spec k$
the absolute Frobenius morphisms.
    For $i\in \N$ denote by $\shD_i$ the category of triples 
    $(\shF,V,\lambda)$ where $\shF\in\Vect(\stX)$, $V\in\Vect(k)$ and 
    $\lambda\colon  F^{i*}\shF \to V\otimes_k\odi\stX$ is an isomorphism. 
    Then the category $\shD_i$ is  $k$-Tannakian with $k$-structure 
     $k\to \End_{\shD_i}(\odi\stX,k,\id)$, $x\mapsto (x,x^{p^i})$. 
    Moreover the functor
\[
\shD_i\arr\shD_{i+1}\comma (\shF,V,\lambda)\longmapsto (\shF,F_k^*V,F^*\lambda)
\]
is $k$-linear, monoidal and exact, and 
there is a natural equivalence of $k$-Tannakian categories:
\[
    \shD_\infty\coloneqq \varinjlim_{i\in\N}\shD_i \arrdi \simeq\Rep(\Pi^\LL_{\stX/k}).
\]
\end{thm}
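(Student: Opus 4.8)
The plan is to realise each $\shD_i$ as a \emph{neutral} $k$-Tannakian category, to check that the transition functors are exact monoidal and $k$-linear, and then to match the resulting pro-group with the one attached to $\Pi^\LL_{\stX/k}$ via the universal property of \ref{local nori gerbe}. The natural candidate fibre functor is the forgetful functor $\omega_i\colon(\shF,V,\lambda)\longmapsto V$ into $\Vect(k)$; the tensor structure is $(\shF,V,\lambda)\otimes(\shF',V',\lambda')=(\shF\otimes\shF',V\otimes V',\lambda\otimes\lambda')$, with unit $(\odi\stX,k,\id)$ and duals $(\shF^\vee,V^\vee,(\lambda^\vee)^{-1})$. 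Two things are essentially formal. First, $\omega_i$ is faithful: if a morphism $(a,b)$ has $b=0$ then the compatibility $\lambda'\circ F^{i*}a=(b\otimes\id)\circ\lambda$ forces $F^{i*}a=0$, and since $\stX$ is reduced the local matrix entries of $a$, being $p^i$-th roots of $0$, must vanish, so $a=0$. Second, the computation of $\End_{\shD_i}(\odi\stX,k,\id)$ yields exactly the pairs $(s,s^{p^i})$ with $s\in\Hl^0(\odi\stX)$ and $s^{p^i}\in k$; the hypothesis that $\Hl^0(\odi\stX)$ contains no element purely inseparable over $k$ then gives $s\in k$, which simultaneously pins down the $k$-structure $x\mapsto(x,x^{p^i})$ and shows $\End(\mathbf 1)=k$.

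The delicate point, and the one I expect to be the main obstacle, is that $\shD_i$ is abelian with $\omega_i$ exact, even though $\Vect(\stX)$ itself is not abelian. The mechanism is that the trivialisation $\lambda$ rigidifies morphisms: for $(a,b)\colon(\shF,V,\lambda)\to(\shF',V',\lambda')$ one has $F^{i*}a=\lambda'^{-1}\circ(b\otimes\id)\circ\lambda$, so after $F^{i*}$ the map $a$ becomes the constant endomorphism $b\otimes\id$ of a trivial bundle, whose kernel, image and cokernel inside $V\otimes_k\odi\stX$ are the constant subbundles $(\ker b)\otimes\odi\stX$, $(\im b)\otimes\odi\stX$ and $(\coker b)\otimes\odi\stX$. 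The key lemma I would isolate is a Frobenius-descent statement: these constant subbundles of $F^{i*}\shF$ descend along $F^{i}$ to subbundles of $\shF$, so that $\ker a,\ \im a,\ \coker a$ are again vector bundles carrying induced trivialisations and lie in $\shD_i$. Concretely one verifies this over the regular locus, where $F$ is finite faithfully flat by Kunz's theorem and ordinary fppf descent applies, and propagates the conclusion using reducedness; alternatively one checks directly that the sheaf-theoretic kernel and cokernel are locally free because their $i$-th Frobenius pullbacks are trivialised. Granting this, $\omega_i$ is exact and faithful, so $\shD_i$ is neutral $k$-Tannakian and $\shD_i\simeq\Rep(\pi_i)$ for an affine $k$-group $\pi_i$.

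Next I would check that $T_i\colon\shD_i\to\shD_{i+1}$, $(\shF,V,\lambda)\mapsto(\shF,F_k^*V,F^*\lambda)$, is $k$-linear, exact and monoidal: exactness and monoidality follow from those of $F^*$ and $F_k^*$ on trivial bundles, while compatibility with the twisted $k$-structures holds because a scalar $x$ acting on the $V$-side is raised to $x^{p}$ after one more Frobenius, exactly matching the passage from $x\mapsto x^{p^i}$ to $x\mapsto x^{p^{i+1}}$; here it is essential that $k$ is perfect, so that $F_k^*$ is an autoequivalence of $\Vect(k)$. Consequently $\shD_\infty=\varinjlim_i\shD_i$ is a filtered pseudo-colimit of neutral $k$-Tannakian categories along exact tensor functors, hence is again neutral $k$-Tannakian, with fibre functor $\omega_\infty=\varinjlim_i\omega_i$ and Tannakian group the affine $k$-group $\pi=\varprojlim_i\pi_i$.

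Finally I would identify $\Rep(\pi)=\shD_\infty$ with $\Rep(\Pi^\LL_{\stX/k})$. The conceptual content is that an object of $\shD_\infty$ is a vector bundle on $\stX$ that becomes trivial after some Frobenius pullback, together with its trivialisation, and these are precisely the bundles classified by torsors under finite connected (local) $k$-groups. I would therefore verify the universal property of \ref{local nori gerbe}: for a finite local gerbe $\Gamma=\Bi_k G$ with $G$ local, a morphism $\stX\to\Gamma$ is the same datum as a $G$-torsor trivialised by a Frobenius power, equivalently a finite tensor subcategory of $\shD_\infty$ whose Tannakian group is a local finite quotient of $\pi$; conversely every finite quotient of $\pi$ is local because each $\pi_i$ is built out of $F^{i}$-trivialised bundles and hence is pro-local. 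Matching these two descriptions of $\Hom_k(\stX,\Gamma)$ functorially, and invoking the uniqueness of the pro-local gerbe, then produces the natural equivalence $\shD_\infty\simeq\Rep(\Pi^\LL_{\stX/k})$. The residual difficulty here is bookkeeping: showing that the ``trivialised by $F^{i}$'' filtration of $\shD_\infty$ corresponds exactly to the finite local gerbes of \ref{local nori gerbe}, with no bundles omitted and with the twisted $k$-structures correctly aligned along the colimit.
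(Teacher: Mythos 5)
A preliminary remark on the comparison: the paper contains no proof of this statement --- it is quoted from \cite[Theorem 7.1, pp.36]{TZ} --- so your proposal can only be measured against the statement's intended generality and the way the paper later uses it. Your outline has the right overall skeleton (Tannakian structure on each $\shD_i$, exact tensor transition functors, identification of the colimit via the universal property of \ref{local nori gerbe} using the fact that torsors under local groups of height $\le i$ are trivialised by $F^i$), and your faithfulness and $\End(\mathbf{1})$ computations are correct. But two steps are genuinely broken.

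The first is your ``key lemma'' that the constant subbundles of $F^{i*}\shF$ descend along $F^i$; this is both unnecessary and unestablished. Descent of subbundles along Frobenius is false without extra data (for $i=1$ on smooth schemes it is precisely Cartier descent, requiring a connection with vanishing $p$-curvature); Kunz gives flatness of $F$ only at regular points, and local freeness of $\ker a$ on a dense regular open does not ``propagate by reducedness'' to the whole of $\stX$. Your fallback --- that $\ker a$ and $\Coker a$ are locally free ``because their Frobenius pullbacks are trivialised'' --- is circular: $F^{i*}$ is not exact when $\stX$ is not regular, so $F^{i*}(\ker a)$ is not a priori $\ker(F^{i*}a)$. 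The working argument avoids descent entirely: under $F^{i*}$ the local matrix entries, hence all minors, of $a$ are raised to $p^i$-th powers, so the fibrewise rank of $a$ at every point equals that of $F^{i*}a$, which is constant equal to $\rank(b)$; a map of vector bundles of constant fibrewise rank on a \emph{reduced} stack has locally free kernel, image and cokernel, and once $\im a$ and $\Coker a$ are locally free the formation of kernel and cokernel commutes with the (possibly non-flat) pullback $F^{i*}$, producing the induced trivialisations. This is where reducedness does its real work.

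The second gap is that your neutrality claims misread the twisted $k$-structure and introduce a circularity. With $x$ acting as $(x,x^{p^i})$, your fibre functor $\omega_i(\shF,V,\lambda)=V$ is not $k$-linear but $F_k^i$-semilinear, since $\omega_i(x\cdot\id)=x^{p^i}$; it is a fibre functor over the $k$-algebra $(k,F_k^i)$, which suffices (by Deligne's criterion) to make $\shD_i$ $k$-Tannakian, but it does not neutralise $\shD_i$ over $k$. The correction $V\mapsto F_k^{-i*}V$ requires $k$ perfect, and the paper deliberately quarantines this in Remark \ref{neutralization of local Tannakian categories}: the theorem itself is asserted under the hypotheses of \ref{local nori gerbe} alone, with no perfectness assumption and an a priori non-neutral target. (Perfectness is also not where you locate it: the transition functor is $k$-linear for the twisted structures over \emph{any} $k$, because $F_k^*(x^{p^i}\id_V)=x^{p^{i+1}}\id$.) Relatedly, writing every finite local gerbe as $\Bi_k G$ in the final step is unavailable over imperfect $k$, and over perfect $k$ it is the content of Lemma \ref{profinite gerbe}, whose proof in this paper invokes the present theorem --- so assuming it is circular. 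The correct final step works with possibly non-neutral $\Gamma$: the $i$-th relative Frobenius $\Gamma\arr\Gamma^{(p^i)}$ of a finite local gerbe of height $\le i$ factors through $\Spec k$, so pullback of representations along any $\stX\arr\Gamma$ lands canonically in $\shD_i$, and one concludes via Tannaka duality identifying $\Hom_k(\stX,\Gamma)$ with exact tensor functors $\Vect(\Gamma)\arr\Vect(\stX)$ --- exactly the bookkeeping you flagged but did not carry out.
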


\begin{prop}
With notation from \ref{Tannakian interpretation of the local nori gerbe} the functors
\[
\shD_i \to \shD_{i+1}
\]
and therefore the functors $\shD_i\to \shD_\infty$ are fully faithful.
In particular $\shD_i$ is a sub-Tannakian category of $\shD_j$ for $j>i$ and $j=\infty$.
\end{prop}
\begin{proof}
The last claim is a consequence of \cite[Remark B.7]{TZ19}:
the full faithfulness is enough to conclude that the
corresponding map on gerbes is a quotient.

 Consider $E=(\shE,V,\lambda)$, $E'=(\shE',V',\lambda')\in \shD_i$. We have
\[
\Hom(E,E')=\{(\alpha,\beta)\in \Hom(\shE,\shE')\times \Hom(V,V')\st \lambda'F^{i*}(\alpha)=(\beta\otimes\odi \stX)\lambda\}.
\]
The functor $\Phi\colon \shD_i\to \shD_{i+1}$ maps $(\E,V,\lambda)$
to $(\E,F_k^*V,F_k^*\lambda)$ and $(\alpha,\beta)\in \Hom(E,E')$
to $(\alpha,F_k^*\beta)$. Let $(\alpha,\delta)\in \Hom(\Phi(E),\Phi(E'))$. 
Fix isomorphisms $V\simeq k^n$ and $V'\simeq k^m$, so that
$F_k^*V\simeq k^n$ and $F_k^*V'\simeq k^m$. The map
$\delta\colon F_k^*V\to F_k^*V'$ is therefore a matrix
$\delta=(\delta_{ij})$ with $\delta_{ij}\in k$. Consider the map
\[
\beta=\lambda' F^{i*}(\alpha) \lambda^{-1}\colon V\otimes
\odi \stX\to V'\otimes \odi \stX.
\]
This is represented by a matrix $\beta=(\beta_{ij})$ with $\beta_{ij}\in
\Hl^0(\odi\stX)$. The hypothesis is that $\beta_{ij}^p=\delta_{ij}$.
Since $\stX$ is reduced, we have  $\beta_{ij}\in k$. Thus $(\alpha,\beta)\in \Hom(E,E')$ induces $(\alpha,\delta)$. 
\end{proof}

We now consider the case of a perfect field. In this case the local Nori gerbe exists for all reduced algebraic stacks.
%

\begin{rmk}\label{neutralization of local Tannakian categories}
  Using the same notations from 
  \ref{Tannakian interpretation of the local nori gerbe} and assuming that $k$
  is perfect, the functors $\shD_n \arr \Vect(k)$, 
  $(\shF,V,\lambda)\longmapsto F_k^{-n^*}V$, where $F_k$ is the absolute Frobenius of $k$, are compatible when $n$ varies, so they induce a functor $\shD_\infty\arr \Vect(k)$. It is easy to check that this functor is $k$-linear, exact and tensorial. In particular $\shD_\infty$ has a neutralization or, in other words
$\Pi^\LL_{\stX/k}(k)\neq \varnothing$.
\end{rmk}

In fact there is much more: over a perfect field a pro-local gerbe is neutral, and the neutralization is unique up to a unique isomorphism. 

\begin{lem} \label{profinite gerbe} 
Let $\Gamma$ be a pro-local gerbe over a perfect field $k$.
Then $\Gamma(k)$ is equivalent to a set with one point, 
in other words, it is a non-empty groupoid in which between
every two objects there exists exactly one isomorphism.
Equivalently, the Tannakian category $\Vect(\Gamma)$ has
a neutral fiber functor which is unique up to a unique isomorphism.
\end{lem}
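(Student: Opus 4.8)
The plan is to reduce everything to a single finite and local gerbe, and there to separate a degree-one statement (isomorphisms between $k$-points are unique and exist, because torsors under finite local group schemes over a perfect field are trivial) from a degree-two statement (the gerbe is neutral, because an $H^2$-obstruction vanishes). The last point will be the real work.

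First I would unwind the limit. By \ref{local stuff} write $\Gamma\simeq\varprojlim_i\Gamma_i$ as a cofiltered limit of finite and local gerbes. Evaluation at $\Spec k$ takes this limit of stacks to the $2$-categorical limit of groupoids, so $\Gamma(k)\simeq\varprojlim_i\Gamma_i(k)$. A purely formal argument then shows that a cofiltered $2$-limit of nonempty groupoids, each having exactly one morphism between any two of its objects, is again of this form: choosing an object $x_i\in\Gamma_i(k)$ for every $i$, the unique transition isomorphisms satisfy the cocycle identities automatically (both sides of each identity are the unique isomorphism between the two objects involved), so they assemble into an object of the limit; and a morphism in the limit is a compatible family of the unique componentwise morphisms, hence itself unique. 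Thus it suffices to treat a finite and local gerbe, which I again denote $\Gamma$.

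Next, the degree-one part. For $k$-points $x,y\in\Gamma(k)$ the automorphism scheme $G:=\Aut(x)$ is a finite and local $k$-group scheme. As $k$ is perfect $G_{\red}$ is étale, and being connected with an identity section it is trivial, so $G(k)=\{1\}$; hence there is \emph{at most} one isomorphism $x\to y$. For existence, the sheaf $\Isom(x,y)$ is an fppf torsor under $G$, so it is a finite $k$-scheme that becomes connected over $\overline k$ (a torsor under a connected group is geometrically connected). Therefore $\Isom(x,y)=\Spec A$ with $A$ local Artinian and residue field purely inseparable over $k$, which over the perfect field $k$ must be $k$ itself; the quotient $A\to k$ is a $k$-point, giving the (necessarily unique) isomorphism $x\to y$. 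This settles that between any two objects of $\Gamma(k)$ there is exactly one isomorphism.

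Finally, the degree-two part, which I expect to be the main obstacle: producing a $k$-point of $\Gamma$, i.e.\ neutralizing it. Choose an object $\xi\in\Gamma(A)$ over a faithfully flat finite cover $\Spec A\to\Spec k$. Since $\Gamma$ is a gerbe the two pullbacks of $\xi$ to $\Spec(A\otimes_k A)$ are isomorphic, and the failure of a chosen isomorphism to be a cocycle on the triple product is a $2$-cocycle valued in the center $Z$ of the inertia of $\Gamma$, a finite local commutative $k$-group scheme; its class in $H^2_{\mathrm{fppf}}(\Spec k,Z)$ is exactly the obstruction to descending $\xi$ to a $k$-object. It remains to kill this class, and it is enough to show $H^2_{\mathrm{fppf}}(\Spec k,Z)=0$. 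By Dieudonné theory $Z$ admits a filtration whose successive quotients are copies of $\alpha_p$ and $\mu_p$, so by the long exact sequences it suffices to check $H^2_{\mathrm{fppf}}(\Spec k,\alpha_p)=H^2_{\mathrm{fppf}}(\Spec k,\mu_p)=0$. The first follows from $0\to\alpha_p\to\G_a\xrightarrow{F}\G_a\to 0$ together with $H^{\ge 1}_{\mathrm{fppf}}(\Spec k,\G_a)=0$; the second, via $0\to\mu_p\to\Gm\xrightarrow{p}\Gm\to 0$ and $\Pic(\Spec k)=0$, equals $\mathrm{Br}(k)[p]$, which vanishes for a perfect field of characteristic $p$ (every element of $k^\ast$ is a $p$-th power, so every degree-$p$ cyclic algebra splits). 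Hence $\xi$ descends and $\Gamma(k)\neq\emptyset$. The genuine difficulty is concentrated in this final vanishing: over a perfect field finite local group schemes carry no higher fppf cohomology, which is precisely what forces pro-local gerbes to be neutral.
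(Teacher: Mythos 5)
Your reduction to a single finite local gerbe and your ``degree-one'' argument are both fine and in fact coincide with the paper's: the paper also observes that for $\Gamma=\Bi G$ with $G$ finite and local, any $G$-torsor $P$ over $k$ is $\Spec A$ with $A$ Artinian local with purely inseparable residue field, so $P(k)$ is a singleton by perfectness. The genuine gap is in your ``degree-two'' step, and it is exactly where you located the difficulty. Your claim that the failure of the cocycle condition on the triple product ``is a $2$-cocycle valued in the center $Z$ of the inertia'' is unjustified unless the band of $\Gamma$ is abelian: for a non-abelian finite local gerbe (e.g.\ one banded by a form of $\alpha_p\rtimes\mu_p$), the coboundary of your chosen isomorphism is a section of the full automorphism group of $\xi$, not of its center, and modifying the isomorphism changes it by a non-abelian twisted coboundary. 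The correct framework is Giraud's non-abelian $H^2$ with bands, and there the vanishing of $H^2_{\mathrm{fppf}}(k,Z)$ only tells you that the set of equivalence classes of gerbes with the given band $L$ is a singleton; to conclude that this unique class is the \emph{neutral} one you additionally need $L$ to be the band of an actual group scheme over $k$ (i.e.\ a neutral class must exist), which is a nontrivial descent statement about outer forms that your sketch never addresses. Two smaller inaccuracies in the same step: the two pullbacks of $\xi$ to $\Spec(A\otimes_k A)$ are only fppf-\emph{locally} isomorphic (e.g.\ $H^1_{\mathrm{fppf}}(k[\epsilon],\alpha_p)\neq 0$, so isomorphism over the Artinian ring $A\otimes_k A$ is not automatic and the cover must be refined), and over a perfect but non-closed $k$ the simple subquotients of $Z$ are \emph{forms} of $\alpha_p$ and $\mu_p$ rather than $\alpha_p$ and $\mu_p$ themselves (forms of $\mu_p$ can be nontrivial since $\Aut(\mu_p)=(\Z/p\Z)^\times$ is \'etale); both are repairable, unlike the central-band reduction.

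For comparison, the paper sidesteps cohomology entirely: it notes that $\Gamma$ is reduced (it admits a faithfully flat map from the spectrum of a field) and local, hence $\Gamma=\Pi^\LL_{\Gamma/k}$, and then neutralizes via the explicit Tannakian fiber functor of \ref{neutralization of local Tannakian categories}, namely $\shD_n\arr\Vect(k)$, $(\shF,V,\lambda)\longmapsto F_k^{-n^*}V$, which exists precisely because the Frobenius of a perfect $k$ is invertible. That Frobenius-untwisting construction is the real content replacing your $H^2$-vanishing; if you want to keep a cohomological proof you would have to either restrict to abelian bands or invoke Giraud's machinery together with an argument that finite local bands over perfect fields are globally representable by group schemes.
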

\begin{proof}
Since $\Gamma$ is a profinite gerbe, we may write
$\Gamma:=\varprojlim_{i\in I}\Gamma_i$, where $I$ is a cofiltered
essentially small category and $\Gamma_i$ are finite and local
gerbes over $k$. In this way we reduce to the case where $\Gamma$ is finite.

We first show that $\Gamma(k)\neq \varnothing$. The stack $\Gamma$
is reduced because it has a faithfully flat map from a reduced
scheme, namely the spectrum of some field. Moreover since
$\Gamma$ is local we clearly have $\Gamma=\Pi^\LL_{\Gamma/k}$.
Thus $\Gamma(k)=\Pi^\LL_{\stX/k}(k)\neq \varnothing$
by \ref{neutralization of local Tannakian categories}.

In particular $\Gamma=\Bi G$, where $G$ is a finite and local
group scheme over $k$. If $P$ is a $G$-torsor over $k$ then
$P$ is finite and geometrically connected. Thus $P=\Spec A$,
where~$A$ is local, finite with purely inseparable residue
field extension $l/k$. Since $k$ is perfect, we have $l=k$.
Thus $P(k)$ consists of one element, as we wanted to show.
\end{proof}

\begin{rmk} The key point of Lemma \ref{profinite gerbe} is that
    any non-empty finite
    and
    local stack over a perfect field $k$ has a $k$-rational point. Indeed, consider the $n$-th relative Frobenius twist: $\Gamma\arr \Gamma^{(n)}$.  According to \cite[Lemma 3.6, p.~11]{TZ19} there exists $n\in \N$ such that the Frobenius twist factors through $\Gamma_\et$ which is equal to $\Spec(k)$ because $\Gamma$ is finite and local. This provides a rational section for $\Gamma^{(n)}$. But since $k$ is perfect, by twisting back with $(-)^{(-n)}$, we see that $\Gamma$ has a $k$-rational point.
\end{rmk}

Since affine group schemes are the same as affine gerbes with
a given rational section, we obtain the following:

\begin{cor}\label{local groups are local gerbes} Let $k$ be a perfect field.
The functor
   \[
  \begin{tikzcd}[column sep=20,
    ,row sep = 0ex
    ,/tikz/column 1/.append style={anchor=base east}
    ,/tikz/column 2/.append style={anchor=base west}
    ]
\{\textup{pro-local group schemes over }k\} \ar[r]
& \{\textup{pro-local gerbes over }k\} \\
G \ar[r,mapsto] & \Bi G
  \end{tikzcd}
  \]
is an equivalence of categories (meaning that between two functors
of pro-local gerbes there exists at most one natural isomorphism).
\end{cor}

\begin{defn} \label{def local Nori}
 Let $\stX$ be a reduced algebraic stack over a perfect field $k$.
We denote by $\pi^\L(\stX/k)$ the pro-local
group scheme such that $\Pi^\L_{\stX/k}=\Bi \pi^\L(\stX/k)$
and call it the \emph{local Nori fundamental group scheme} of
$\stX$ over $k$.
\end{defn}

\begin{defn}
 If $G$ is an affine group scheme over $k$, a $G$-torsor
$P\to X$ is \emph{minimal} if it is not induced by a torsor
under a strict subgroup of $G$. 
\end{defn}

\begin{rmk}
 This notion of minimality for a torsor is similar but different from the Nori reduced one (see \cite[Definition 5.10]{BV15}). Indeed for a $G$-torsor $P\to X$ being Nori reduced means that for any factorization 
 \[
 X\to \Gamma \arrdi \gamma \Bi G
 \]
 where $\Gamma$ is a finite gerbe and $\gamma$ is faithful, the map $\gamma$ is an isomorphism. Minimality requires the same property, but under the additional assumption that $\Gamma\simeq \Bi H$ and $\gamma$ "preserves the trivial torsors", that is, it is induced by a homomorphism $H\to G$.
 
 We see therefore that Nori reduced implies minimal, but the converse is not true in general: if $L/k$ is a Galois extension with group $G$ then $\Spec L\to \Spec k$ is a minimal $G$-torsor while the corresponding map $\Spec k\to \Bi G$ is not Nori reduced.
 
 On the other hand the two notions coincide in the following two cases.
 \begin{itemize}
  \item The field $k$ is algebraically closed, because in this case for any finite gerbe $\Gamma$ the category $\Gamma(k)$ is non-empty, so that $\Gamma\simeq \Bi H$ is neutral, and any $H$-torsor over $k$ is trivial. 
  \item The group $G$ is a pro-local group scheme, thanks to Corollary \ref{local groups are local gerbes}.
 \end{itemize}
\end{rmk}

\begin{prop}\label{prop: universal property local fund group}
 Let $\stX$ be a reduced algebraic stack over a perfect field $k$.
Then $\pi^\L(\stX/k)$ is the unique pro-local fundamental group
scheme over $k$ with natural equivalences
 \[
\Hom_k(\pi^\LL(\stX/k),G) \arr \{G\text{-torsors over }\stX\} 
\]
functorial in the finite and local group scheme $G$ over $k$.
Moreover a group homomorphism
\[
\pi^\LL(\stX/k)\arr G
\]
is surjective if and only if the corresponding $G$-torsor is minimal.
\end{prop}

\begin{proof}
 This follows from the universal property of the local Nori gerbe
and the equivalence in \ref{local groups are local gerbes}. In
particular the category on the right is indeed a set, that is
there exists at most one isomorphism between two $G$-torsors over $\stX$.
\end{proof}

\begin{rmk}\label{rmk: universal property local fund group}
 An alternative way to state \ref{prop: universal property local fund group}
is that there are natural bijections
 \[
 \Hom_k(\pi^\LL(\stX/k),G) \simeq \Hl^1(\stX,G)
 \]
functorial in the finite, local group scheme $G$ over $k$.
\end{rmk}

In this special situation of local group schemes there is no need
for choosing a rational or geometric point. A similar phenomenon
also appears in  \cite[Proposition 2.21 (ii) and Remark 2.22]{Zh18}.
The following proposition shows that this is indeed not a coincidence:

\begin{prop}\label{projlim}
 Let $\stX$ be a reduced algebraic stack over a perfect field $k$ and consider the category 
 $\shN(\stX/k)$ of pairs $(G,\stP)$ where $G$ is a finite and local group
 scheme over $k$ and $f\colon\stP\arr\stX$ is a $G$-torsor.
 Then $\shN(\stX/k)$ is a small cofiltered category and there is
 a canonical isomorphism
 $$
 \pi^\LL(\stX/k) \simeq \varprojlim_{(G,\shP)\in \shN(\stX/k)} G.
 $$
\end{prop}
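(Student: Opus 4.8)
The plan is to translate the whole statement into the language of homomorphisms out of $\pi^\LL(\stX/k)$ by means of its universal property, and then to recognize the resulting limit as the tautological presentation of a pro-local group scheme as the limit of its finite and local quotients. Concretely, I would first use the universal property recorded after \ref{local groups are local gerbes}, namely that $\Hom_k(\pi^\LL(\stX/k),G)\simeq\{G\text{-torsors over }\stX\}$ naturally in the finite and local group scheme $G$, to produce an equivalence between $\shN(\stX/k)$ and the category $\shH$ whose objects are pairs $(G,\phi)$ with $G$ finite and local and $\phi\colon\pi^\LL(\stX/k)\arr G$ a homomorphism, a morphism $(G,\phi)\arr(G',\phi')$ being a homomorphism $\psi\colon G\arr G'$ with $\psi\circ\phi=\phi'$. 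The key point is that, since between two torsors there is at most one isomorphism, a morphism of pairs in $\shN(\stX/k)$ is the same datum as a homomorphism $\psi\colon G\arr G'$ identifying $\stP'$ with the induced torsor $\stP\times^{G}G'$, and under the correspondence $\stP\leftrightarrow\phi$ inducing a torsor along $\psi$ corresponds to post-composing $\phi$ with $\psi$. This identification also settles smallness: the finite and local group schemes over $k$ form an essentially small family and each $\Hom_k(\pi^\LL(\stX/k),G)$ is a set, so $\shH$, hence $\shN(\stX/k)$, is essentially small.

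Next I would verify that $\shH$ is cofiltered. It is nonempty (take the trivial homomorphism). Given $(G_1,\phi_1)$ and $(G_2,\phi_2)$, the object $(G_1\times G_2,(\phi_1,\phi_2))$, which on the torsor side is $\stP_1\times_\stX\stP_2$, maps to both, using that a product of finite and local group schemes is again finite and local. For two parallel arrows $\psi_1,\psi_2\colon(G,\phi)\arr(G',\phi')$ I would pass to the equalizer $E\subseteq G$ of $\psi_1$ and $\psi_2$, a closed subgroup scheme which is again finite and local; since $\psi_1\circ\phi=\psi_2\circ\phi=\phi'$, the homomorphism $\phi$ factors through $E$, and the inclusion $E\inj G$ defines a morphism of $\shH$ which equalizes $\psi_1$ and $\psi_2$.

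For the isomorphism, the family $(\phi)_{(G,\phi)}$ is a cone on the diagram $(G,\phi)\mapsto G$, so it induces a canonical map $\Theta\colon\pi^\LL(\stX/k)\arr\varprojlim_{(G,\phi)\in\shH}G$, and I would prove $\Theta$ is an isomorphism on Hopf algebras, where it becomes $\varinjlim k[G]\arr k[\pi^\LL(\stX/k)]$, the map on the component of $(G,\phi)$ being $\phi^{*}$. Surjectivity holds because a pro-local group scheme has $k[\pi^\LL(\stX/k)]$ equal to the directed union of its finite and local sub-Hopf-algebras, and these are exactly the $\phi^{*}(k[G])$ arising from surjective $\phi$, that is from minimal torsors. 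Injectivity follows from the factorization $\phi=\iota\circ\bar\phi$ with $\bar\phi\colon\pi^\LL(\stX/k)\twoheadrightarrow\im\phi$ and $\iota\colon\im\phi\inj G$ a closed embedding: if $\phi^{*}(x)=0$ then, as $\bar\phi^{*}$ is injective, already $\iota^{*}(x)=0$, and $\iota$ is a transition morphism of the diagram, so $x$ dies in the colimit. In other words, the full subcategory of surjective $\phi$ is coinitial in $\shH$, and over it $\Theta$ restricts to the tautological identification of a pro-local group with the limit of its finite and local quotients.

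The main obstacle is precisely this last identification: one must know that $\pi^\LL(\stX/k)$, being pro-local in the sense of \ref{local stuff}, is canonically the limit of its finite and local quotients, equivalently that its Hopf algebra is the directed union of its finite-dimensional local sub-Hopf-algebras, and that passing from all torsors to the minimal ones (from all homomorphisms to the surjective ones) does not alter the limit. The uniqueness of isomorphisms between torsors, which makes $\shN(\stX/k)$ rigid enough to be identified strictly with $\shH$, is what guarantees that the comparison isomorphism is canonical.
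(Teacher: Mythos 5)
Your proposal is correct and follows essentially the same route as the paper: both use the universal property to identify $\shN(\stX/k)$ with the category of homomorphisms from $\pi^\LL(\stX/k)$ to finite and local group schemes, reduce the limit to the coinitial subcategory of surjective homomorphisms, and conclude via the identification of the pro-local group $\pi^\LL(\stX/k)$ with the limit of its finite and local quotients on the level of Hopf algebras. The only differences are cosmetic: you check cofilteredness by hand with products and equalizers where the paper invokes fiber products (citing \cite{Zh}), and you phrase the Hopf-algebra comparison as injectivity and surjectivity of the canonical map rather than as two mutual inclusions of sub-Hopf-algebras.
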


\begin{proof}
By the above discussion we obtain that the category $\shN(\stX/k)$
is equivalent to the category $\Hom_k(\pi^\LL(\stX/k),-)$ of morphisms
from $\pi^\LL(\stX/k)$ to finite and local group schemes. Notice that
$\Hom_k(\pi^\LL(\stX/k),-)$ has fiber products and, in particular, 
it is cofiltered (see \cite[Remark 1.3, (i)]{Zh18}). Moreover,
if $\sN'(\sX/k)$ is the full subcategory of $\Hom_k(\pi^\LL(\stX/k),-)$
consisting of quotient maps, then we have an isomorphism
\[
\varprojlim_{\shN(\stX/k)} G \arrdi \simeq
\varprojlim_{\shN'(\stX/k)} G
\]
which is the limit of all finite and local quotients of
$\pi^\LL(\stX/k)$. Thus the Hopf algebra of
$\varprojlim_{\shN(\stX/k)} G$ is contained in
$k[\pi^\LL(\stX/k)]$. Since $\pi^\LL(\stX/k)$ is pro-local,
 it is a cofiltered limit of some of its finite and local quotients.
In this way we obtain an inclusion of Hopf algebras in the other
direction, and this finishes the proof.
\end{proof}

\begin{cor}\label{Zhang}
If $X$ is a reduced scheme over a perfect field $k$ with a
geometric point $x\colon \Spec \Omega\arr X$, where $\Omega$
is an algebraically closed field, then $\pi^\LL(X/k)$ coincides
with the group scheme $\pi^\LL(X/k,x)$ defined in
\cite[Definition 4.5(iv)]{Zh18}.
\end{cor}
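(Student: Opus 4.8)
The plan is to realize both group schemes as cofiltered limits of their finite and local quotients and to show that the two indexing categories---unpointed torsors on one side, pointed torsors on the other---agree, because for a local group scheme a lift of $x$ to a torsor exists and is unique. By \ref{projlim} we have
\[\pi^\LL(X/k)\simeq\varprojlim_{(G,\shP)\in\shN(X/k)}G,\]
the limit running over all pairs with $G$ finite and local and $\shP\arr X$ a $G$-torsor. The pointed group scheme $\pi^\LL(X/k,x)$ of \cite[Definition 5, (iv), pp.12]{Zh} unwinds, via the fibre functor at $x$, to the cofiltered limit of the same finite and local $G$, now indexed by \emph{pointed} torsors, i.e.\ $G$-torsors $\shP\arr X$ equipped with a lift of $x$ to a point of $\shP$. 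Hence it suffices to prove that, for every finite and local $G$, the forgetful functor from pointed to unpointed $G$-torsors over $X$ is an equivalence.

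First I would establish existence of a lift of $x$. The pullback $x^*\shP$ is a $G$-torsor over $\Spec\Omega$, hence---arguing exactly as in the proof of \ref{profinite gerbe}---of the form $\Spec A$ with $A$ a finite local $\Omega$-algebra whose residue field, being a finite extension of the algebraically closed field $\Omega$, is $\Omega$ itself; the quotient $A\arr\Omega$ then produces an $\Omega$-point of $x^*\shP$ and thereby a lift of $x$ to $\shP$. This gives essential surjectivity of the forgetful functor.

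Next I would prove uniqueness of the lift, which upgrades this to full faithfulness. Any two lifts of $x$ differ by the action of an element of $G(\Omega)$; but $G$ is finite and connected, so its unique point is the identity section, its residue field is $k$, and every homomorphism to the reduced ring $\Omega$ factors through it. Thus $G(\Omega)=\{e\}$ and the lift is unique. It follows that the two cofiltered categories are canonically equivalent, and transporting the universal property of \ref{projlim} across this equivalence yields the isomorphism $\pi^\LL(X/k)\simeq\pi^\LL(X/k,x)$.

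The genuinely delicate point is the uniqueness of the trivialization at $x$ rather than the formal comparison of limits: it is precisely the infinitesimal nature of local group schemes, forcing $G(\Omega)=\{e\}$, that renders the base point superfluous, in the spirit of \cite[Remark 2.22, pp.21]{Zh}. Everything else is a routine transport of structure along an equivalence of cofiltered categories.
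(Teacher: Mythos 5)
Your proof is correct and is exactly the argument the paper intends: the corollary is stated as an immediate consequence of \ref{projlim} together with the observation (made in the discussion preceding \ref{local nori group}, with reference to \cite[Proposition 2.21 (ii) and Remark 2.22, pp.21]{Zh}) that for torsors under finite local group schemes a geometric point lifts uniquely, so the pointed and unpointed indexing categories coincide. You have merely written out the unique-lifting details (triviality of the fibre torsor over the algebraically closed field $\Omega$ and $G(\Omega)=\{e\}$ for $G$ infinitesimal) that the paper leaves implicit.
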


\begin{proof}
The right-hand side of the isomorphism in~\ref{projlim}
is the group scheme $\pi^\LL(X/k,x)$ as defined in
\cite[Definitions 3.6 and 4.5(iv)]{Zh18}.
\end{proof}

\begin{prop}
If $K/k$ is a field extension with $k$ perfect, then we have an
isomorphism between $\pi^\LL(K/k)$ as defined in~\ref{Local Tanna}
and $\pi^\LL(\Spec K/k)$ as defined in~\ref{def local Nori}.
\end{prop}

\begin{proof}
 Using notations from \ref{Tannakian interpretation of the local nori gerbe}
 for $\stX=\Spec K$ we have to show that there is an equivalence of Tannakian
 categories $\shD_\infty \simeq \shD_\infty(K/k)$. 
If $(V,W,\phi)\in\shD_n(K/k)$ with $V\in \Vect(K)$, $W\in \Vect( k)$ and
$\phi\colon F^{n^*}V\simeq W\otimes_k K$, then via the isomorphism of fields
$K^{1/p^n}\xrightarrow{} K$, $x\mapsto x^{p^n}$,
we get
$F^{n^*}V\simeq V\otimes_K K^{1/p^n}$ and
$W\otimes_k K\simeq (F_k^{-n^*}W)\otimes_k K^{1/p^n}$. 
Then it is not difficult to show that, for $n\in\N$, the category 
$\shD_n(K/k)$ is equivalent to the category of triples 
$(M,N,\varphi)$ where 
$M\in\Vect(K)$, $N\in \Vect(k)$ and 
$\varphi\colon M\otimes_K K^{1/p^n}\simeq N\otimes_k K^{1/p^n}$
is an isomorphism. By passing to the limit we get $\shD_\infty \simeq \shD_\infty(K/k)$.
\end{proof}

We conclude this section by showing that the local fundamental group
detects isomorphisms. We need the following lemma.

\begin{lem}\label{picard and extensions}
Let $K/k$ be an extension of fields. Then there are natural
isomorphisms:
\[
\begin{array}{lcl}
\Hom(\pi^\L(K/k),\alpha_{p^n})\simeq K/K^{p^n} & \text{and} &
\Hom(\pi^\L(K/k),\mathbb{G}_a) \simeq K^\perf/K, \\
\Hom(\pi^\L(K/k),\mu_{p^n})\simeq K^*/{K^*}^{p^n} & \text{and} &
\Hom(\pi^\L(K/k),\Gm) \simeq (K^\perf)^*/K^*.
\end{array}
 \]
\end{lem}

\begin{proof}
The isomorphism for $\alpha_{p^n}$ follows
from \ref{rmk: universal property local fund group}. Moreover, the
isomorphisms for $n$ and for $n+1$ fit in a commutative diagram:
\[
\begin{tikzcd}
\Hom(\pi^\L(K/k),\alpha_{p^n}) \ar[r,"\cong"] \ar[d,hook]
& K/K^{p^n} \ar[r,"\cong"] \ar[d,swap,"x\mapsto x^p"] & K^{1/p^n}/K \ar[d,hook] \\
\Hom(\pi^\L(K/k),\alpha_{p^{n+1}}) \ar[r,"\cong"]
& K/K^{p^{n+1}} \ar[r,"\cong"] & K^{1/p^{n+1}}/K.
\end{tikzcd}
\]
Since $\pi^\L(K/k)$ is pro-local, we have:
\[
\Hom(\pi^\L(K/k),\mathbb{G}_a) \simeq \varinjlim_n \Hom(\pi^\L(K/k),\alpha_{p^n}).
\]
The transition maps are described in the above commutative diagram,
hence
\[
 \Hom(\pi^\L(K/k),\mathbb{G}_a) \simeq \varinjlim_n K^{1/p^n}/K = K^\perf/K.
 \]
In a similar way, one retrieves the isomorphism for $\mu_{p^n}$ from
\ref{rmk: universal property local fund group} and derives
the isomorphism for $\Gm$ (which was alternatively obtained in
\ref{picard of local gerbe}).
\end{proof}
 
 \begin{prop}\label{isom fund group}
 Let $k$ be a perfect field and $E/K$ be a finitely generated
extension of fields over $k$. Then the map
 \[
 \pi^\L(E/k)\to \pi^\L(K/k)
 \]
 is an isomorphism if and only if $K=E$ or $E/K$ is a finite
extension of perfect fields.
\end{prop}

\begin{proof}
The if part follows from \ref{perfect means group zero}.
 Assume that the map on fundamental groups is an isomorphism. By \ref{picard and extensions} the map
 \begin{equation}\label{perf iso}
 K^\perf/K\to E^\perf/E
 \end{equation}
 is an isomorphism.
 In particular the map $K^\perf \to E^\perf/E$ is surjective,
and $E^\perf=K^\perf(E)$. Write $K\subseteq F\subseteq E$
with $F/K$ purely transcendental and $E/F$ finite. We have
 \[
   K^\perf \subseteq K^\perf(F) \subseteq K^\perf(E)=E^\perf.
 \]
Since  the finitely generated extension 
$K^\perf(E)=E^\perf$ over $K^\perf$ contains the extension $K^\perf(F)/K^\perf$, $K^\perf(F)$
cannot contain indeterminates, that is $F=K$ and $E/K$ is finite.
 
 Now split $E/K$ as $K\subseteq S\subseteq E$ with $S/K$ separable and $E/S$ purely inseparable. Let $n$ be an index such that $E^{p^n}\subseteq S$. By \ref{picard and extensions} the composition
 \[
 K/K^{p^n}\to S/S^{p^n}\to E/E^{p^n}
 \]
 is an isomorphism. In particular the second map is surjective. But, since $E^{p^n}\subseteq S$, the image of the second map is $S/E^{p^n}$ and therefore $S=E$, that is $E/K$ is separable.
 
 So $E=K[x]/(f(x))$, for some separable polynomial $f\in K[x]$. The separability implies that 
 \[
 E^\perf = K^\perf[x]/(f(x))
 \]
 so that the isomorphism (\ref{perf iso}) is the inclusion
 \[
 K^\perf/K \to (K^\perf/K)^{ \deg f}.
 \]
 It follows that either $K=K^\perf$, so that also $E$ is perfect,
or $\deg f=1$, that is $K=E$.
\end{proof}

\section{Generic surjectivity} \label{section:generic surjectivity}

We fix a perfect field $k$ of characteristic $p>0$. The aim of
this section is to show Theorem \ref{main theorem II} and
related results. 

Let $\sX$ be an algebraic stack over $k$, and let $G$ be a
finite local $k$-group scheme.
Firstly, we look for a criterion for the surjectivity of the map induced on local fundamental group schemes.

\begin{rmk}\label{section and reduce a torsor to a group}
    If $\sP\arr \sX$ is a $G$-torsor over an algebraic stack
    $\sX$ and
    $H\subseteq G$ is a subgroup then $\sP$ is induced by an
            $H$-torsor if and only if $\sP/H \arr \sX$ has a
            section.
            
                Indeed, if $\sP/H\arr \sX$ admits a
            section, then the $H$-torsor inducing
            $\sP\arr\sX$ is just the pullback of $\sP\arr\sP/H$
            along the section. Conversely, if $\sP\arr \sX$
            reduces to a $H$-torsor $\sQ\arr\sX$, then the map
            $$\sX\simeq \sQ/H\arr \sP/H\arr \sX$$ provides a
            section.
\end{rmk}

\begin{rmk}\label{section and reduceness}
If $\stX$ is a reduced algebraic stack over $k$, $\stP\to \stX$ is a torsor under a finite local group scheme $G$ over $k$ and $H\subseteq G$ is a subgroup then $\stP/H\to \stX$ is a finite universal homeomorphism. In particular $\stP\to \stX$ is induced by an $H$-torsor if and only if $(\stP/H)_\red \to \stX$ is an isomorphism.
\end{rmk}


\begin{rmk} \label{surjectivity and minimal torsor}
If $\sV\arr \sX$ is a map between reduced algebraic stacks then
$\pi^\LL(\sV/k)\arr\pi^\LL(\sX/k)$
is surjective if and only if the following condition holds: if
$\sP\arr \sX$ is a minimal $G$-torsor for a local group scheme $G$ then
$\sP\times_\sX\sV\arr \sV$ is also minimal. 

Indeed, the condition that the
pullback of any minimal finite local $G$-torsor on $\sX$ to $\sV$ is 
minimal is equivalent to the condition that any surjective map
$\pi^\LL(\sX/k)\arr G$ is still surjective after composing with
$\pi^\LL(\sV/k)\arr\pi^\LL(\sX/k)$, and this is
equivalent to saying that the map
$\pi^\LL(\sV/k)\arr\pi^\LL(\sX/k)$ itself is surjective.
\end{rmk}

Putting together \ref{section and reduce a torsor to a group} and \ref{surjectivity and minimal torsor} we obtain the following criterion.
\begin{lem}\label{criterion surjective}
 Let $\sV\arr \sX$ be a map between reduced algebraic stacks. Assume that if $\stP\to \stX$ is a minimal torsor under a finite local group scheme $G$ over $k$ and $H\subseteq G$ is a subgroup then any map $ \stV\to \stP/H$ over $\stX$ extends to a section of $\stP/H\to \stX$.  Then
 $$
 \pi^\L(\stV/k)\to \pi^\L(\stX/k)
 $$ is surjective. 
\end{lem}

\begin{lem}\label{surjective fund group}
    Let $K$ be a field extension of $k$.
Let $E$ be  a non-zero  $K$-algebra
such that if $e\in E$ and $e^p\in K$ then $e\in K$
(e.g. when $E$ is geometrically reduced over $K$). Then the map 
 \[
 \pi^\L(E/k)\to \pi^\L(K/k)
 \]
 is surjective. 
\end{lem}

\begin{proof}
The condition on $E$ implies that $E$ is reduced, so that   $\pi^\L(E/k)$ exists by \ref{existence of local gerbe}.  

Following \ref{criterion surjective} let $P\to \Spec K$ be a minimal $G$-torsor with a map $\Spec E\to P/H$ over $K$. By \ref{section and reduceness} we see that $(P/H)_\red \to \Spec K$ is a finite universal homeomorphism, that is $(P/H)_\red=\Spec F$ for a finite purely inseparable field extension $F/K$, and we need to show that it is an  isomorphism.

Since $E$ is reduced we have factorizations $\Spec E \to (P/H)_\red \subseteq P/H$ and therefore  
$$K \subseteq F\subseteq E$$ 
The condition on $E$ implies  $F=K$ as required.
 \end{proof}

\begin{cor}\label{series surjective}
 The map
 \[
 \pi^\L(k((t))/k)\to \pi^\L(k(t)/k)
 \]
 is surjective.
\end{cor}

\begin{proof}
From \ref{surjective fund group} we just have to show that if
$u\in k((t))$ and $u^p\in k(t)$, then $u\in k(t)$. Multiplying
$u$ with a denominator of $u^p$ we may assume that $u^p\in
k[t]$. Now  $u=\sum u_it^i$ is a Laurent
series,
and $u_i^p=0$ for $i\gg 0$, hence $u_i=0$ for $i\gg 0$, hence
$u$ is a rational function.
\end{proof}

\begin{prop}\label{one variable is enough}
Let  $E/k$ be a finitely generated extension
of transcendence degree $n$. Then there are elements $t_1,\dots,t_n\in E$
algebraically independent over~$k$ such that
 \[
 \pi^\L(E/k)\arr \pi^\L(k(t_1,\dots,t_n)/k)
 \]
 is surjective. In particular, if $E/k$ is not finite, there exists
an indeterminate $t\in E$ and a surjective map
 \[
 \pi^\L(E/k)\to \pi^\L(k(t)/k)
 \]
\end{prop}

\begin{proof}
By \cite[\href{http://stacks.math.columbia.edu/tag/030Q}{030Q}]{SP21}
there is a subfield $k\subseteq F \subseteq E$
such that $F/k$ is purely transcental of degree $n$ and $E/F$ is finite
and separable. We see that both extensions $E/F$ and $F/k(t_1)$ satisfy
the hypothesis of \ref{surjective fund group}, proving the result.
\end{proof}

 \begin{lem}\label{geometrically reduced map}
 Let $f:\sY\arr\sX$ be a faithfully flat geometrically reduced map of
 reduced algebraic stacks over $k$. Then the induced map
 $$\pi^\L(\sY/k)\to \pi^\L(\sX/k)$$ is surjective.
 \end{lem}
 \begin{proof}
     Following \ref{criterion surjective} let  $\sP\to \sX$ be a
minimal $G$-torsor with a map $\stY\to \stP/H$ over $\stX$.

Since $f$ is geometrically reduced and thanks to \ref{section and reduceness} we have that
$$
((\sP/H)\times_\sX\sY)_\red=(\sP/H)_\red\times_\sX\sY \to \stY
$$
is an isomorphism. In other words the map $(\sP/H)_\red\to \sX$ is an
isomorphism after pulling back to $\sY$ and, again by \ref{section and
reduceness}, we need to show that is an isomorphism.

For this, we can replace $\stX$ by an affine open of an atlas and, since $(\sP/H)_\red\arr\sX$ is affine, also assume it is the spectrum of a local ring.  In this case $\stY\arr \stX$ is automatically an fpqc covering, and by descent we get the result.
 \end{proof}

\begin{lem}\label{affinazation surjective}
 Let $\stX$ be a reduced algebraic stack over $k$. Then
 \[
 \pi^\L(\stX/k)\to \pi^\L( \Hl^0(\odi\stX)/k)
 \]
 is surjective.
\end{lem}
\begin{proof}
    Following \ref{criterion surjective} let $P \arr \Spec \Hl^0(\odi \stX)$ be a minimal $G$-torsor with a map $\stX\to P/H$ over $\Hl^0(\odi\stX)$. The desired factorization exists because $P/H$ is affine.
\end{proof}

In the following, we will deal with normal algebraic stacks. These are  algebraic stacks which admit a smooth atlas from a normal scheme in the sense of \cite[\href{https://stacks.math.columbia.edu/tag/033H}{033H}]{SP21}.  In particular, these algebraic stacks may not be locally Noetherian.
Notice also that any smooth atlas of a normal algebraic stack is normal (see \cite[\href{https://stacks.math.columbia.edu/tag/04YH}{04YH}]{SP21}). 

We need a technical lemma.
\begin{lem} \label{atlas and generic points}
 Let $\stX$ be a quasi-separated algebraic stack and $f\colon V \to \stX$ be an open and locally of finite type map from a scheme. Then:
 \begin{enumerate}
  \item the morphism $f$ maps generic points into generic points;
  \item if $\sX$ is irreducible then all generic points of $V$ have an open irreducible neighborhood;
  \item if $V$ is quasi-compact and $\stZ$ is an irreducible component of $\stX$ then $f^{-1}(\stZ)$ is either empty or a finite union of irreducible components $W$ of $V$ such that $W\to \stZ$ is dominant. 
 \end{enumerate}
\end{lem}
\begin{proof}
Notice that the topological space $|\stX|$ is sober by
\cite[\href{https://stacks.math.columbia.edu/tag/0DQQ}{0DQQ}]{SP21}.

$(3) \then (1)$. Let $v\in V$ be a generic point and $\xi\in |\stX|$ a generic point such that $f(v)\in \overline{\{\xi\}}$. Replacing $V$ by a quasi-compact open neighborhood of $v$ and applying $(3)$, we can conclude that the map $\overline{\{v\}} \to \overline{\{\xi\}}$ is dominant, which means $f(v)=\xi$

$(3) \then (2)$. Let $v\in V$ be a generic point and $U\subseteq V$ a quasi-compact open subset such that $v\in U$. By $(3)$ applied to $\stZ=\stX$, it follows that $U$ has finitely many irreducible components. Thus it is enough to remove from $U$ all the irreducible components not containing $v$.

$(3)$. As $\stX$ is quasi-separated and $V$ is quasi-compact it follows that $f\colon V\to \stX$ is a quasi-compact map. We can assume $f^{-1}(\stZ)\neq \emptyset$. Let $\xi$ be the generic point of $\stZ$ and set $T=f^{-1}(\xi)$.

We have $\overline{T}=f^{-1}(\stZ)$, so that, in particular, $T\neq \emptyset$. Indeed otherwise we would have the contradiction
\[
f(V-\overline{T})\cap \stZ\neq \emptyset \then \xi \in f(V-\overline{T}) \then T\cap (V-\overline{T})\neq \emptyset
\]
Here we used that $f(V-\overline{T})$ is open as $f$ is open.

If $\Spec \Omega\to \stX$ is a geometric point mapping to $\xi$, the continuous map $V_\Omega=\Spec \Omega \times_\stX V \to V$ surjects onto $T$. As $f$ is quasi-compact and locally of finite type, it follows that $V_\Omega$ is of finite type over $\Omega$. If $t_1,\dots,t_r$ are the image of the (finitely many) generic points of $V_\Omega$ along the map $V_\Omega\to T$, then 
$$
T=\overline{\{t_1\}}\cup \cdots \cup \overline{\{t_r\}}
$$
where the closure is taken inside the topological space $T$. Taking the closure now inside $V$ we can conclude that the generic points of $\overline T=f^{-1}(\stZ)$ are among the $t_1,\dots,t_r$. So they are finitely many and they all maps to the generic point $\xi$ as required.
\end{proof}

 \begin{lem}\label{normal algebraic stack}
 Let $\sX$ be a normal, quasi-separated and irreducible algebraic stack over $k$, and
 let $\sU\subseteq \sX$ be a non-empty open substack. Then the
 map $$\pi^\LL(\sU/k)\arr\pi^\LL(\sX/k)$$ is surjective.
 \end{lem}
 \begin{proof}
 Following \ref{criterion surjective} let $\sP\to\sX$ be a minimal $G$-torsor with a section $\stU\to \stP/H$ over $\stX$. By \ref{section and reduceness} the morphism $(\stP/H)_\red \to \stX$ is an isomorphism over $\stU$ and we need to show that $(\stP/H)_\red \to \stX$ is an isomorphism as well.
 
 We show that a finite birational map $\stY\to \stX$ from a reduced algebraic stack is an isomorphism. We can assume that $\stX$ is quasi-compact and consider a smooth atlas $V\to \stX$ from a quasi-compact scheme.
     
     By \ref{atlas and generic points}, $(3)$ the scheme $V$ has finitely many irreducible components. Since $V$ is normal, it follows that it is a finite disjoint union of integral normal schemes. Thus we can assume that $\stX$ is an integral normal scheme and also that it is affine. Thus $\stX=\Spec D$, for a normal domain $D$ and $\stY=\Spec B$, for a reduced ring $B$.
     
     Since $\stY$ contains a dense open subset isomorphic to an open subset of $\stX$ and therefore irreducible, it follows that $\stY$ is integral. More precisely that $B$ is a domain with the same fraction field of $D$. As $D\to B$ is an (injective) integral extension and $D$ is normal we can conclude that $D=B$.
     \end{proof}

 \begin{lem}\label{pre-generic surjectivity}
    Let $X$ be an integral normal scheme over $k$, and let
    $\Spec\Omega \arr X$ be the generic point of $X$. Then the
    map $$\pi^\LL(\Spec \Omega/k)\arr\pi^\LL(X/k)$$ is surjective.
 \end{lem}
 \begin{proof}
 Following \ref{criterion surjective} let $P\to X$ be a minimal $G$-torsor with a map $\Spec \Omega\to P/H$ over $X$.
 By \ref{section and reduceness} it follows that $(P/H)_\red \to X$ is a finite universal homeomorphism which is generically an isomorphism. It is therefore an isomorphism: one first reduce to the affine case and then argue as in the end of the proof of \ref{normal algebraic stack}.
  \end{proof}

\begin{lem}\label{generic surjectivity}
Let $\sX$ be a normal, quasi-separated, irreducible algebraic stack over $k$. Let
$\Spec\Omega$ be a generic point of a smooth atlas of $\sX$,
then the map $$\pi^\LL(\Spec\Omega/k)\arr\pi^\LL(\sX/k)$$ is
surjective. 
\end{lem}

\begin{proof}
Let  $f\colon V\arr\sX$ be the smooth atlas. By \ref{atlas and generic points}, $(2)$ there is an open and irreducible subset $U$ of $V$ containing the given generic point. In particular $U$ is integral and normal. Let $\sU=f(U) \subseteq \sX$.  Then we can decompose
$\pi^\LL(\Spec\Omega/k)\arr\pi^\LL(\sX/k)$ into
$$\pi^\LL(\Spec\Omega/k)\xrightarrow{\ \lambda_1\
}\pi^\LL(U/k)\xrightarrow{\ \lambda_2\
}\pi^\LL(\sU/k)\xrightarrow{\ \lambda_3\ }\pi^\LL(\sX/k)$$
where $\lambda_1$ is surjective because of \ref{pre-generic
surjectivity},
$\lambda_2$ is surjective by \ref{geometrically reduced map} because $U\to \sU$ is geometrically
reduced and faithfully flat, and $\lambda_3$ is surjective by \ref{normal algebraic stack}, whence the result.
\end{proof}

\begin{thm}\label{main surjectivity fund group}
 Let $\stX$ be a normal, quasi-separated and irreducible algebraic stack over $k$, $\stV$ a reduced algebraic stack and $\stV\to \stX$ be a map. Assume that there is a commutative diagram
   \[
  \begin{tikzpicture}[xscale=1.5,yscale=-1.2]
    \node (A0_0) at (0, 0) {$\stW$};
    \node (A0_1) at (1, 0) {$\stV$};
    \node (A1_0) at (0, 1) {$\Spec \Omega$};
    \node (A1_1) at (1, 1) {$\stX$};
    \path (A0_0) edge [->]node [auto] {$\scriptstyle{}$} (A0_1);
    \path (A0_0) edge [->]node [auto] {$\scriptstyle{}$} (A1_0);
    \path (A0_1) edge [->]node [auto] {$\scriptstyle{}$} (A1_1);
    \path (A1_0) edge [->]node [auto] {$\scriptstyle{}$} (A1_1);
  \end{tikzpicture}
  \]
where $\Spec \Omega$ is a generic point of a smooth atlas of $\stX$ and $\stW$ is a (non empty) reduced algebraic stack with the following property: if $z\in \Hl^0(\odi \stW)$ and $z^p \in \Omega$ then $z\in \Omega$. Then the map
 \[
 \pi^\L(\stV/k)\to \pi^\L(\stX/k)
 \]
 is surjective.
\end{thm}

\begin{proof} The map $\pi^\LL(\Spec
    \Omega/k)\arr \pi^\LL(\sX/k)$ is surjective by \ref{generic
    surjectivity}. By functoriality
    it is enough to show that $ \pi^\L(\stW/k)\to
    \pi^\L(\Spec\Omega/k)$ is surjective. This map factors as
    $$ \pi^\L(\stW/k)\xrightarrow{\psi}
    \pi^\LL(\Spec\Hl^0(\sO_\stW)/k)\xrightarrow{\phi}
    \pi^\L(\Spec\Omega/k)$$ 
    Since $\phi$ is surjective by \ref{surjective fund group}, while $\psi$ is surjective by \ref{affinazation surjective}, we get the result.
 \end{proof}

 \begin{rmk}
  Theorem \ref{main surjectivity fund group} continues to be true if $\stX=X$ is a normal integral scheme and $\Spec \Omega\to X$ is its generic point. Just use \ref{pre-generic surjectivity} instead of \ref{generic surjectivity}.
 \end{rmk}

\begin{proof}[Proof of Theorem \ref{main theorem II}]
  Taking into account \ref{atlas and generic points}, $(1)$, everything follows from \ref{main surjectivity fund group}.
\end{proof}

\end{document}